\declaretheorem[name=Theorem,numberwithin=section]{thm}
\theoremstyle{definition}
\newtheorem{cor}[thm]{Corollary}
\newtheorem{conv}[thm]{Convention}
\newtheorem{prop}[thm]{Proposition}
\newtheorem{lem}[thm]{Lemma}
\newtheorem{conj}[thm]{Conjecture}
\newtheorem{quest}[thm]{Question}
\newtheorem{defn}[thm]{Definition}
\theoremstyle{remark}
\newtheorem{rem}[thm]{Remark}
\let\c@equation\c@thm
\numberwithin{equation}{section}
\title{Widths of links via diagram colorings}
\author{Ricky Lee, Puttipong Pongtanapaisan, and Hanh Vo}
\begin{document}

 \begin{abstract}
In this paper, we define invariants of links in terms of colorings of link diagrams and prove that these invariants coincide with various notions of widths of links with respect to the standard Morse function. Our formulations are advantageous because they are algorithmic and suitable for program implementations. As an application, we calculate the max-width of over 10000 links up to 14 crossings from the link table.
\end{abstract}

\maketitle
\section{Introduction}

Several important geometric invariants are extracted from a link in a standard Morse position by analyzing how the critical points are arranged. Many authors have demonstrated that when the minimum value of such an invariant is attained, useful conclusions can be drawn about the geometry and topology of the link exterior \cite{taylor2013c,scharlemann20063,gabai1987foliations}. For instance, a crucial step in the proofs of the property R conjecture by Gabai \cite{gabai1987foliations} and the knot complement conjecture by Gordon-Luecke \cite{mca1989knots} is to put a knot in a width-minimizing position. Another famous application is due to Thompson \cite{thompson1997thin}, where she showed that if a width-minimizing position of a link in the 3-sphere is not a bridge position position then the link complement contains an essential meridional planar surface.

Hayashi and Shimokawa generalized Thompson's result to links in more general 3-manifolds \cite{hayashi2001thin} (see also \cite{blair2019height}). Their definition of width is a list rather than a single number. Recording the width this way has the advantage that the Gabai width can be extracted by adding up the entries in the list in a natural fashion and Ozawa's trunk can be obtained by taking the maximum integer in the list \footnote{These notions also appeared in Lackenby's paper \cite{lackenby2010spectral} as lex-width, sum-width, and max-width}. Another motivation for studying Ozawa's trunk comes from studying polymers in nanochannels \cite{beaton2018characterising,beaton2022entanglement}. More precisely, Arsuaga et al. showed that a link in a lattice tube, which can be modeled a confined polymer, fits in an $(N\times M)$-tube if and only if the trunk of the link is strictly less than $(N+1)(M+1)$ \cite{ishihara2017bounds}.

Like many other invariants, one can get an upper bound for this lexicographical width by looking at the maxima and the minima of a link diagram from a link table. However, the estimate obtain this way is usually not very accurate. For example, it is known that certain invariants are never obtainable on a crossing number minimizing diagram \cite{blair2019incompatibility}. The goal of this paper is to provide an alternative method of calculating invariants related to Morse positions by coloring link diagrams.

The idea to use colorings to dictate embeddings was pioneered by Blair et al. \cite{blair2020wirtinger}. They used this technique to verify the Meridional Rank Conjecture for at least 1, 363,137, or approximately 80.1\%, of all 1,701,936 prime knots with crossing number at most 16 \cite{blair2022coxeter}. Their method was adapted by the first author to compute Gabai widths of knots in \cite{lee2019algorithmic} and by the second author to compute virtual bridge numbers in \cite{pongtanapaisan2019wirtinger}. The following theorem is the main result of this paper.

\begin{restatable}{thm}{main}
\label{thm:main}
    Let $L$ be a link in $S^3.$ Then, the lex-width of $L$ is equal to the Wirtinger lex-width. 
\end{restatable}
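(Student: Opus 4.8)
The plan is to prove Theorem~\ref{thm:main} by establishing two inequalities between these lexicographically ordered quantities and then combining them. In one direction, every Morse position of $L$ yields a coloring of an associated diagram whose cost list is no larger, in the lexicographic order, than the width list of that position; in the other direction, every coloring of a diagram of $L$ is realized by an embedded representative of $L$ in Morse position whose width list is no larger than the cost list of the coloring. Taking the minimum of each side over the appropriate class of Morse positions, respectively of colorings, then forces the two invariants to agree.

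For the inequality ``Wirtinger lex-width $\le$ lex-width'', I would take $L$ in Morse position with respect to the standard height function and project it to a generic diagram $D$. Sweeping $L$ upward and recording what happens at each critical level gives a coloring sequence on $D$ under the dictionary: each local minimum of $L$ produces a seed arc, each crossing that is passed produces a Wirtinger propagation move, and each local maximum retires a colored arc. One then checks that the number of colored arcs that are ``active'' just below a given regular level matches the number of intersection points of $L$ with that level, so that the cost list of the resulting coloring, read between consecutive critical values, coincides with the width list of the given Morse position. Minimizing over Morse positions then yields the inequality; the verification here is essentially bookkeeping once the dictionary is fixed.

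The reverse inequality, ``lex-width $\le$ Wirtinger lex-width'', is the substantive half. Starting from a coloring sequence of a diagram $D$ of $L$ whose cost list is lexicographically minimal, I would build an embedded representative of $L$ in Morse position from the bottom up, processing the coloring moves in their given order: introduce a local minimum for each seed arc, realize each propagation move by isotoping the newly colored understrand across the relevant overstrand, and introduce a local maximum exactly when a colored arc has served its purpose, in the sense that both of its endpoints on $D$ have been reached and it is not needed for any later propagation. One must check that at each stage the partially constructed tangle meets each regular level in a number of points equal to the number of currently active colored arcs, so that the width list of the final Morse position equals the cost list of the coloring.

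The main obstacle is ensuring that the propagation moves in this construction are genuinely \emph{cost-free}: pushing a color across a crossing may require the understrand that receives the new color to be dragged to a new location in the diagram, and a naive isotopy of an embedded strand realizing this can create a canceling maximum--minimum pair and inflate the width list. Here the key device, adapting the idea of Blair et al.\ that colored arcs can be pushed into a collar of the top bridge sphere \cite{blair2020wirtinger,lee2019algorithmic}, is to keep every already-colored arc in a thin product neighborhood of the top level throughout the construction, so that each propagation takes place inside a product region and contributes no critical points beyond the single maximum eventually used to cap off that arc. A secondary subtlety is lexicographic control of the move order: one must argue that the moves of an optimal coloring can be reordered to be compatible with a monotone bottom-to-top build without increasing the cost list in the lexicographic order, which an exchange argument performing innermost (or lower) moves first on the coloring sequence should handle. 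Once these points are in place, the two inequalities give the claimed equality.
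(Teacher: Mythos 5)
Your two-inequality scaffold is the same as the paper's, and both directions use the right idea at a high level: sweep a Morse embedding to get a coloring, and embed a coloring to get a Morse position. However, your write-up glosses over the parts that actually carry the weight, and omits a whole branch of the argument.

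For the direction ``Wirtinger lex-width $\le$ lex-width'' (the paper's Proposition~\ref{prop:colorbyheight}), the difficulty is not bookkeeping. Your dictionary pairs one flavor of local extremum with seed additions and the other flavor with special crossings, but the cost list is defined via \emph{special crossings}, not extrema, and the correspondence between the ``retiring'' extrema and special crossings is a genuine theorem. The paper isolates this as Lemma~\ref{mins}: in a coloring by height, every local minimum above a level forces at least one special crossing by the corresponding stage, and proving this requires a careful case analysis of how the strand carrying that minimum got its color. Moreover, even stating that lemma requires first isotoping the width-minimizer so that no strand meets a level line more than twice (Lemma~\ref{lem:nowiggle}); without that normalization a single strand can wiggle through a level and defeat the count. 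You claim the cost list ``coincides'' with the width list; the paper only establishes an inequality termwise, which suffices but is a different statement, and your proposal offers no argument for either.

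For the reverse direction, your bottom-up build with colored arcs kept in a collar is a plausible alternative to the paper's construction (which instead embeds the strands in distinct parallel planes, ordered by coloring stage, and then joins endpoints by monotone arcs or single-minimum arcs according to whether the crossing is ordinary or a special crossing of type I). But you do not address the multi-component phenomena at all: type~II special crossings, where both understrands share a color and the overstrand is colored \emph{last}, tracing out a monochromatic unknotted component, have no analogue for knots and need a separate construction for the corresponding minimum; and cut-split diagrams require an induction on the number of components (the paper's Case~2, Sub-case~2). Since the theorem is about links, omitting these is a real gap, not a detail. Finally, your ``reordering'' worry is likely a red herring: the coloring sequence is already a total order on the strands, and the paper simply uses that order directly to place the strands; nothing needs to be exchanged.
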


Theorem \ref{thm:main} also holds if lex-width is replaced by max-width or sum-width. This generalizes the main result of \cite{lee2019algorithmic} from one component to multiple components, and from one notion of width to three flavors. Applications of our theorem are listed in the final section. For instance, using our Python code, we calculated the widths precisely for numerous links up to 14 crossings from the link table. For these links, we can use the main result in \cite{ishihara2017bounds} to conclude that they can be embedded in a $(3\times 1)$-lattice tube. Via 2-fold branched covering, we also obtain the values of the width of some 3-manifolds.

\subsection*{Organizations}
The paper is organized as follows. In Section \ref{section:prelims} we discussed basic terminologies related to widths and thin positions. We also include important propositions that we will used later in the paper that were already proved in the literature. In Section \ref{sec:coloringfromembedding}, we prove that the lex-width define in terms of coloring in this paper is at most the lex-width defined in its original conception. Theorem \ref{thm:main} is proved in Section \ref{sec:embeddingfromcoloring}, where the other inequality comparing the two ways of defining the lex-width is proved. Section \ref{sec:computational} is devoted to explaining how the implementation using Python works. In Section \ref{sec:apps}, we finish the paper with some applications and questions.

\section{Preliminaries}\label{section:prelims}

In this section, we review the concepts that previously appeared in the literature.
\subsection{Original formulations of widths}
Let $L$ be an ambient isotopy class of a link in $S^3$ and let $h : S^3 \rightarrow \mathbb{R}$ be the standard height function. Let $\gamma$ be a smooth embedding of link type $L$, where $h|_{\gamma}$ is Morse and all critical points of $h|_{\gamma}$ have distinct critical values. We define the \textit{lex-width} of $\gamma$ to be the multi-set of intersection numbers of the regular level surfaces. Here, one re-orders each multi-set into a descending sequence. Since the value only changes as one passes through a critical point, we need to only record one number for each regular level adjacent to critical levels. 
\begin{defn}
    Using the lexicographical ordering of multi-sets, we define the \textit{lex-width} lex$(L)$ of $L$ to be the minimum lex-width over all embeddings of a link type $L$. 
\end{defn}

Next, from the multi-set of intersection numbers we used to define the lex-width, we define the \textit{Gabai width} to be the sum of the numbers in the multi-set.

\begin{defn}
    The \textit{Gabai width} $w(L)$ of $L$ is the minimum Gabai width over all embeddings of a link type $L$. 
\end{defn}

Lastly, from the multi-set of intersection numbers we used to define the lex-width, we define the \textit{trunk} to be the maximum element from the multi-set.

\begin{defn}
    The \textit{trunk} of $L$, denoted by  trunk$(L)$ is the minimum trunk over all embeddings of a link type $L$. 
\end{defn}

For example, the embedding of a link $L$ in Figure \ref{trunkexample} has lex-width $\{6,6,4,4,4,2,2\}.$ The Gabai width of that embedding is then the sum of the terms, which is 28. The trunk of that embedding is 6, which is the highest term in the sequence. In fact, one can show that the lex-width, Gabai width, and trunk of $L$ are precisely $\{6,6,4,4,4,2,2\}, 28,$ and 6, respectively. 

The following terminologies are commonly used.

\begin{defn}
    We say that $S$ is a \textit{thin level} (resp. \textit{thick level}) for a link $L$ with respect to the standard height function $h$ if $S = h^{-1}(t)$ for some $t$ which lies between adjacent critical values $x$ and $y$ of $h$, where $x$ is a minimum (resp. maximum) of $L$ lying above $t$ and $y$ is a maximum (resp. minimum) of $L$ lying below $t$.
\end{defn}
\begin{rem}
      It is not known whether minimizing one notion of widths mentioned above always gives a position, where the other notions of widths are also minimal. For instance, it is conceivable that in a Gabai width minimizing position thin position, the thickest level does not give the trunk of $L$. Potential examples were proposed in \cite{davies2017natural}.
\end{rem}

\begin{figure}[ht!]
\centering
\includegraphics[width=4cm]{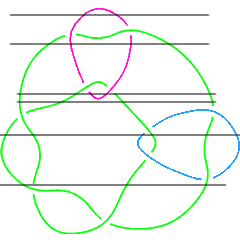}
\caption{One can visualize the intersection of the level planes and the link by parallel lines in the link diagram. This figure shows that the link L11a496 has an embedding with trunk equals six.}\label{trunkexample}
\end{figure}

\subsection{Wirtinger colorings}

In this subsection, we review the terminologies that were used to study the bridge number and Gabai width of knots via diagram colorings.

Treating a link diagram as a disjoint union of arcs, a \textit{strand} is a connected component of a diagram. Given a link diagram, where some strands are colored, a \textit{coloring move} is an addition of one more color at a crossing according to the following rule: if an overstrand is colored, and the incoming strand has the color $c$, then the outgoing strand receives the color $c$ after the coloring move (see Figure \ref{fig:coloringmove}).

\begin{figure}[ht!]
\centering
\includegraphics[width=4cm]{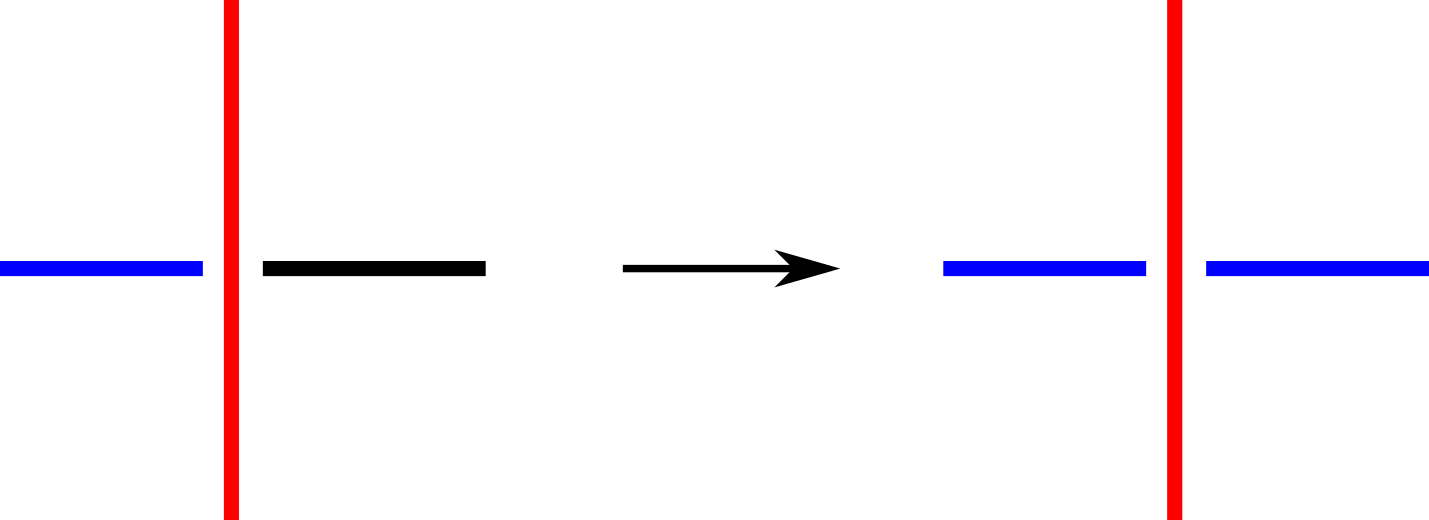}
\caption{The coloring move}\label{fig:coloringmove}
\end{figure}

The strands that we assign the colors to in the beginning prior to coloring moves are called \textit{seeds}. 

A coloring can be specified by a tuple $(A,f)$, where $A$ is a subset of the set of all strands (the colored strands) and where $f:A\rightarrow \mathbb{Z}$ (i.e., which strand receives which color).

\begin{defn}
If $(A_0, f_0) \rightarrow \cdots \rightarrow (A_t, f_t)$ is a sequence of coloring moves and seed additions on
$D$, then we say the sequence is a \textit{partial coloring sequence}. If we have a partial coloring sequence $(A_0, f_0) \rightarrow \cdots \rightarrow (A_t, f_t)$ such that $s(D) = A_t$, then we say the sequence is a \textit{completed coloring
sequence}. If $t$ is an index of a partial coloring $(A_t, f_t)$ in a specified coloring sequence, we refer to $t$ as a \textit{stage}.
\end{defn}

Denote by $s(D)$ the set of strands. If $s \in s(D)$, then the crossings at the two endpoints of $s$ will be referred to as the crossings \textit{incident} to $s$. If $s_p$ and
$s_q$ are the understrands of the same crossing, then we say $s_p$ and $s_q$ are \textit{adjacent}. If $\{s_1, s_2, \cdots , s_m\} \subset
s(D)$ is a subset of strands such that $s_i$
is adjacent to $s_{i+1}$ for each $i \in \{1, 2, \cdots , m-1\}$, then we say
the set $\{s_1, s_2, \cdots , s_m\}$ is \textit{connected}.

We also remind the reader of an important property that we will frequently use. This was essentially Proposition 2.2 in \cite{blair2020wirtinger}.

\begin{prop}\label{prop:connected}
    At every stage of the coloring process, each color in the diagram corresponds to a connected arc of $L$.
\end{prop}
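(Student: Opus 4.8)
The plan is to prove the statement by induction on the stage $t$ in a partial coloring sequence $(A_0,f_0)\to\cdots\to(A_t,f_t)$ on $D$. The input I will rely on is purely topological: for each strand $s$ of $D$, let $\tilde s\subset L$ denote the corresponding sub-arc of the link. As $s$ ranges over $s(D)$, these arcs cover $L$, have pairwise disjoint interiors, and meet only at the points of $L$ lying over undercrossings, where exactly two of them meet. Consequently, $s_p$ and $s_q$ are adjacent precisely when $\tilde s_p\cap\tilde s_q$ is a single point $p$ lying over a crossing at which both are understrands; equivalently, $\tilde s_p$ and $\tilde s_q$ are consecutive along $L$ and share the endpoint $p$. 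For a color $c$ present at stage $t$ write $\widetilde c_t:=\bigcup\{\tilde s: s\in A_t,\ f_t(s)=c\}$; the claim to be proved is that $\widetilde c_t$ is a connected arc of $L$, where I will allow the degenerate case that it is an entire circle component of $L$ (this case genuinely occurs, e.g. when coloring moves propagate a single seed's color around a whole component).

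For the base case, at stage $0$ the tuple $(A_0,f_0)$ records only the seeds; adopting the standard convention that distinct seed additions introduce distinct new colors, each color at stage $0$ is carried by a single strand, so $\widetilde c_0=\tilde s$ is an embedded arc. For the inductive step, assume the claim at stage $t$ and consider the move producing stage $t+1$. If it is a seed addition, a fresh color $c$ is placed on a single strand $s$, so $\widetilde c_{t+1}=\tilde s$ is an arc and all other color classes are unchanged. If it is a coloring move at a crossing $x$ with (colored) overstrand $s_o$, understrand $s_1$ already colored $c=f_t(s_1)$, and previously uncolored understrand $s_2$ now receiving color $c$, then $\widetilde c_{t+1}=\widetilde c_t\cup\tilde s_2$ and every other color class is unchanged; note the overstrand $s_o$, though required for the move to be legal, plays no role in the argument below.

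It remains to see that $\widetilde c_t\cup\tilde s_2$ is again a connected arc. Here $\tilde s_1$ and $\tilde s_2$ share exactly one endpoint, the point $p$ of $L$ lying over $x$, and by the inductive hypothesis $\widetilde c_t$ is a connected arc containing $\tilde s_1$. Since $\widetilde c_t$ is a union of complete strand-arcs and the only strand-arcs incident to $p$ are $\tilde s_1$ and $\tilde s_2$, if $p$ were an interior point of the arc $\widetilde c_t$ then a full neighborhood of $p$ in $L$, hence a sub-arc of $\tilde s_2$, would lie in $\widetilde c_t$, forcing $\tilde s_2\subseteq\widetilde c_t$ and so $s_2\in A_t$, contrary to $s_2$ being uncolored at stage $t$. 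Thus $p$ is an endpoint of $\widetilde c_t$, and $\widetilde c_{t+1}=\widetilde c_t\cup\tilde s_2$ is the union of two arcs meeting exactly at one common endpoint; this is a connected arc, unless the far endpoint of $\tilde s_2$ is also the far endpoint of $\widetilde c_t$, in which case $\widetilde c_{t+1}$ is a whole circle component. In either case the claim holds at stage $t+1$, completing the induction.

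I expect the only real subtlety — the main obstacle — to be the interior-versus-endpoint dichotomy in the last paragraph: the argument crucially uses that color classes are unions of \emph{entire} strand-arcs together with the fact that at most two strand-arcs meet at any undercrossing point, in order to rule out $\tilde s_2$ attaching in the middle of $\widetilde c_t$; and one must explicitly permit the degenerate ``full component'' case so that the induction does not stall on the last strand of a component.
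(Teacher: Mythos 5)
Your proof is correct. The paper itself does not prove this proposition; it defers to Proposition~2.2 of Blair--Kjuchukova--Velazquez--Villanueva, so there is no in-paper argument to compare against line by line. Your induction on the stage is the natural route, and you have correctly identified and handled the one real subtlety: when a coloring move attaches $\tilde s_2$ to the color class $\widetilde c_t$, the shared undercrossing point $p$ must be an endpoint rather than an interior point of $\widetilde c_t$, because the only strand-arcs incident to $p$ are $\tilde s_1$ and $\tilde s_2$, and $\tilde s_2\not\subset\widetilde c_t$ since $s_2\notin A_t$. Allowing the degenerate ``full circle'' outcome is also necessary and you state it explicitly. Two minor points worth noting but not gaps: (i) seed additions introduce fresh colors by definition, so your base-case convention is already built into the setup rather than an extra assumption; (ii) the cut-split degeneracy where $s_1=s_2$ at a crossing makes the coloring move vacuous, which your argument tolerates but does not mention -- a one-line remark would make the proof airtight for the general diagrams the paper considers in Subsection~2.3.
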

The coloring algorithm in \cite{blair2020wirtinger} requires all assignments of colors to happen in the beginning and then all coloring moves are performed. In order to study widths, we will allow another move that adds a new color not used for the seeds to a strand later on in the process. This move is called \textit{seed addition} \cite{lee2019algorithmic}. This is demonstrated in Figure \ref{fig:wirtwidth}, where three new colors are assigned in the beginning, then some coloring moves are performed. The seed addition (purple) is added later on.

\subsection{Cut-split links}\label{subsec:cutsplit}
In \cite{blair2020wirtinger}, there is a certain family of links that requires addition arguments in order obtain an embedding.

A link $L$ is \textit{cut-split} if there exists an unknotted component $U$ of $L$ such that $U$ bounds an embedded disk $B$ in $S^3$ with int($B
) \cap L = \emptyset$ or $|L \cap int(B)|=1$. A link diagram $D$ is \textit{cut-split} if there exists understrands $s_p,s_q$ adjacent at some crossing of $D$ such that $s_p = s_q$ or if there exists a strand that is itself a simple closed curve.

We will handle this case separately as well in this paper.
\section{Coloring from the height function}\label{sec:coloringfromembedding}

In this section, we show that the Wirtinger lex-width is at most the lex-width. With minor adaptations, we also show that the Wirtinger width is at most the Gabai width and the Wirtinger trunk is at most the trunk. Together with the next section, we have the equality between the Wirtinger versions and the original formulations.

\begin{conv}
    When we prove a statement for width with no prefix, we prove the statement for all flavors of widths (lex,max, and sum).
\end{conv}
\begin{defn}
    We say that a coloring stage contains a \textit{special crossing}, if that stage contains either
\begin{enumerate}
    \item A crossing where the overstrand is colored, and the colorings of the adjacent understrands are different (see Figure \ref{fig:wirtwidth}), or
    \item a crossing where an overstrand $a_k$ is colored and the colorings of the adjacent understrands $a_i$ and $a_j$ are the same, but $a_i$ and $a_j$ belong to a one-colored link component $C$. Furthermore, $a_j$ is the final strand of $C$ that receives the color (see Figure \ref{fig:type2}).
\end{enumerate}
We refer to a crossing that fits the description in (1) (resp. in (2)) as a crossing of \textit{type I} (resp. \textit{type II}).
\end{defn}

\begin{figure}[ht!]
\centering
\includegraphics[width=9.9cm]{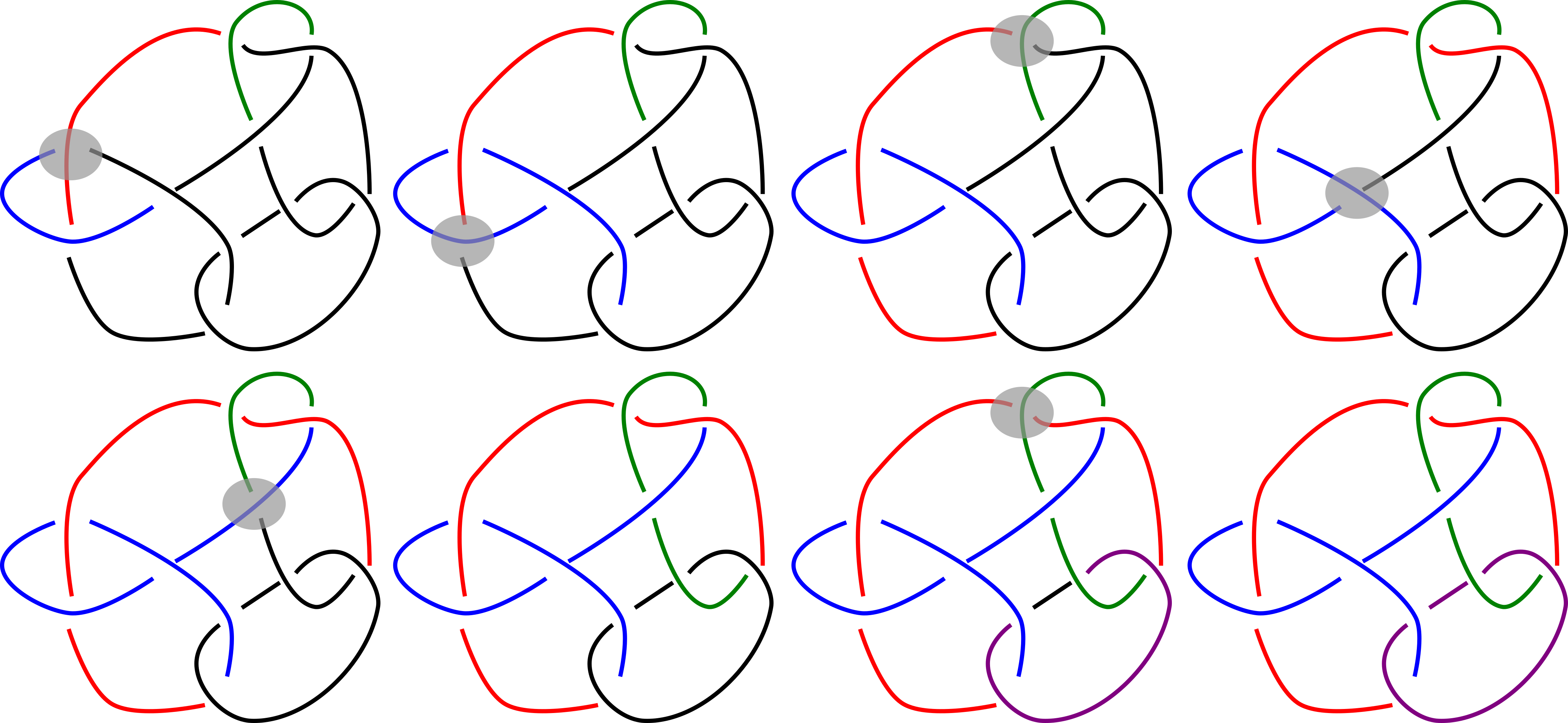}
\caption{A coloring sequence is demonstrated on L10n35 from left to right. The red, blue, and green strands generate the coloring until a special crossing of type I appears in the leftmost diagram on the second row. The purple coloring is added in the penultimate diagram of the second row. This implies that there is an embedding of L10n35 where a minimum is higher than a maximum, giving trunk(L10n35) = 6.}\label{fig:wirtwidth}
\end{figure}

\begin{figure}[ht!]
\centering
\includegraphics[width=9.7cm]{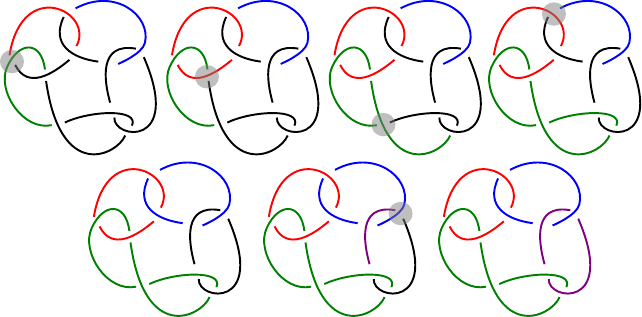}
\caption{Another coloring sequence is demonstrated on L9a55 from left to right. The red, blue, and green strands generate the coloring until a special crossing of type II appears in the leftmost diagram on the second row. The purple coloring is added in the penultimate diagram of the second row. This implies that there is an embedding of L9a55 where a minimum is higher than a maximum, giving trunk(L9a55) = 6.}\label{fig:type2}
\end{figure}

 Let $(A_0, f_0) \rightarrow \cdots \rightarrow (A_J , f_J)$ be a completed coloring sequence. From this sequence, consider the subsequence $d_j$ of seed additions and special crossings (retaining the ordering) that appear. To formally capture this, we introduce the following definition.
\begin{defn}
    Let $(A_0, f_0) \rightarrow \cdots \rightarrow (A_J , f_J )$ be a completed coloring sequence with the special
crossing set $\mathcal{C}$. Let $\mathcal{C}_t$ denote the set of crossings that become special at stage $t$. A $\Delta$-ordering is an enumeration of the elements in $s(D)\cup \mathcal{C}$ satisfying the following conditions:
\begin{enumerate}
    \item  For all $0 \leq t < u \leq J$, all colored strands or crossings that are special at stage $t$ are listed before any colored strands or crossings that are special at stage $u$.
    \item For each stage $0 \leq t \leq J$, the element in $A_t \backslash A_{t-1}$ is listed, followed by all elements in $\mathcal{C}_t$ (if
$\mathcal{C}_t \neq \emptyset)$.
\end{enumerate}

\end{defn}

\begin{defn}\label{def:wirtlexwidth}
    We define the \textit{Wirtinger lex-width} of a complete coloring sequence to be the multi-set $\{a_i\}$ satisfying the following rules:
    \begin{itemize}
        \item $a_0=2$.
        \item If $d_j$ corresponds to a seed addition, then $a_j=a_{j-1}+2$.
        \item If $d_j$ is a special crossing, then $a_j=a_{j-1}-2$.
    \end{itemize}
    The \textit{Wirtinger lex-width} $\mathbb{WL}(D)$ of a diagram is defined to be the minimum Wirtinger lex-width taken over all possible completed coloring sequences defined for the diagram $D$. The \textit{Wirtinger lex-width} $\mathbb{WL}(L)$ of a link is the minimum Wirtinger lex-width taken over all diagrams representing $L$.
\end{defn}

\begin{defn}
    We define the \textit{Wirtinger sum-width} of a complete coloring sequence to be the sum of the elements of the multi-set $\{a_i\}$ from Definition \ref{def:wirtlexwidth}. The \textit{Wirtinger sum-width} $\mathbb{WS}(D)$ of a diagram is defined to be the minimum Wirtinger sum-width taken over all possible completed coloring sequences defined for the diagram $D$. The \textit{Wirtinger sum-width} $\mathbb{WS}(L)$ of a link is the minimum Wirtinger sum-width taken over all diagrams representing $L$.
\end{defn}

\begin{defn}
    We define the \textit{Wirtinger trunk} of a complete coloring sequence to be the maximum element of the multi-set $\{a_i\}$ from Definition \ref{def:wirtlexwidth}. The \textit{Wirtinger trunk} $\mathbb{WT}(D)$ of a diagram is defined to be the minimum Wirtinger trunk taken over all possible completed coloring sequences defined for the diagram $D$. The \textit{Wirtinger trunk} $\mathbb{WT}(L)$ of a link is the minimum Wirtinger trunk taken over all diagrams representing $L$.
\end{defn}

    Given an embedding realizing the lex-width with respect to the standard height function $h(x,y,z)=z$, the goal of this section is to obtain a diagram such that a completed coloring sequence gives a desired ordering of strand colorings and special crossings. Let $D$ be the diagram obtained by projection of $L$ into the yz-plane via the projection map $p:\mathbb{R}^3\rightarrow\mathbb{R}^2$. We can view $D$ as a 4-valent graph, where the edges are strands and the vertices are crossings, with over-under labels at the vertices dictating which strands are over-strands and which are under-strands.

\subsection{Coloring by Height}
	
	We refer the the \emph{height of a strand} $s$ in $D$, denoted $h(s)$, to be sup$_{y\in s}h(y)$. Note that since $h|_L$ is Morse (by assumption, as $L$ realizes minimal  with respect to $h$), the height of a strand is well defined. Performing an arbitrarily small perturbation if necessary, we assume without loss of generality that all strands of $D$ have distinct heights. Moreover, we can also assume that the endpoints of all strands have distinct heights as well. Let $\{s_0,s_1,\ldots, s_J\}$ denote the ordering of the strands by decreasing height. 
	
	We obtain a completed coloring sequence on $D$ 
	\[	(A_0,f_0)\rightarrow \ldots \rightarrow (A_J,f_J)	\]
	satisfying the following properties:
	\begin{enumerate}
		\item Each strand $s$ where $h|_s$ attains it's maximum in the interior of $s$ receives its color via a seed addition.
		\item For all $t>0$, we have $A_t\setminus A_{t-1}=\{s_t\}$ and $A_0=\{s_0\}$. That is, we color the strands by decreasing height.
	\end{enumerate}
	An induction argument can be used to show that such a completed coloring sequence can indeed be obtained. A completed coloring sequence obtained in this way is said to be obtained via \emph{coloring by height}. 
    
To guarantee that before we arrive at a thin level, the number of local minima is not more than the number of special crossings, we will need to appeal to the following lemma. The first author proved the analog of this for Gabai width (see Lemma 5.4 in \cite{lee2019algorithmic}).
\begin{lem}\label{lem:nowiggle}
    Suppose that an embedding of $L$ realizes the lex-width, Gabai width, or max-width. Consider its projection to the $yz$-plane. Let $s(D)$ be the set of strands of the resulting projection. If $s\in s(D)$ and $l\in \mathbb{R}^2$ is a line, which is a projection of a regular value, then $|l \cap s|\leq 2$.
\end{lem}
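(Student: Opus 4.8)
The plan is to argue by contradiction: suppose some strand $s$ meets a regular-value line $l$ in three or more points, and produce a new embedding of $L$ whose lex-width (resp. Gabai width, max-width) is strictly smaller, contradicting minimality. The geometric content is that if $s$ "wiggles" across a horizontal level more than twice, then $s$ carries at least two local maxima and at least one local minimum (or the mirror situation), and these extra critical points can be cancelled by an isotopy supported near $s$. First I would set up the local picture: after the small perturbation made above, $h|_L$ is Morse with distinct critical values, and $l = p(h^{-1}(c))$ for a regular value $c$; the condition $|l\cap s|\ge 3$ forces $h|_s$ to have at least two local maxima and one local minimum in the interior of the arc lying "over" the portion of the diagram near $l$, because the sign of $z-c$ alternates along $s$ at the (at least three) crossings of $l$.

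Next I would perform the cancellation. Since $s$ is a single strand of the diagram, it is an embedded arc in $\mathbb{R}^3$ whose projection is embedded (it is one edge of the 4-valent graph $D$), so near $s$ the link looks like $s$ together with finitely many over-strands passing above it at the crossings where $s$ is the under-strand; nothing of $L$ lies directly below $s$ except possibly at crossings where $s$ is the over-strand, but in a neighborhood of a chosen minimum-maximum pair of $h|_s$ we can choose a sub-arc $s'\subset s$ containing exactly one local min and one adjacent local max of $h|_s$ and no crossing in its interior (shrinking $s'$ if necessary, since there are only finitely many crossings). Then $s'$ is an unknotted, unlinked little arc, and we may isotope it rel endpoints through a product neighborhood to remove that adjacent max-min pair, exactly as in the standard "no local wiggle" move. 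This lowers the number of critical points of $h|_L$ by two.

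Then I would check that this strictly decreases each flavor of width. Removing an adjacent max-min pair from $s'$ deletes from the multi-set $\{a_i\}$ of level intersection numbers the two (equal) largest entries contributed by that wiggle while leaving all other entries unchanged or decreased by $2$; concretely, the level just above the cancelled minimum and just below the cancelled maximum drops by $2$. This strictly decreases the sum (Gabai width), and it strictly decreases the descending sequence in lexicographic order (some entry strictly drops and nothing earlier increases), so it decreases lex-width; for max-width one observes the maximum entry is attained, if anywhere, at a level that is not strictly between the cancelled min and max, and the argument is finished by induction on the number of critical points, or one simply notes that after finitely many such moves every strand meets every regular-value line at most twice and the resulting embedding is no worse. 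In all three cases this contradicts the assumption that the original embedding realized the minimal width. The main obstacle is the bookkeeping in the max-width case and, more delicately, making sure the isotopy of the sub-arc $s'$ can be chosen to avoid the rest of $L$ — this is where we use that $s'$ can be taken to contain no crossings in its interior and that $L$ is otherwise disjoint from a tubular neighborhood of $s'$; I would spell out that neighborhood choice carefully and otherwise quote the corresponding argument (Lemma 5.4) from \cite{lee2019algorithmic} for the Gabai-width case, adapting the final comparison of multi-sets to the lex and max settings.
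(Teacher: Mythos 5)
Your overall strategy — contradiction, reduce to a geometric simplification, and then compare multi-sets — matches the paper's, and your final comparison of the lex/Gabai/max sequences is essentially the same bookkeeping the paper does. However, the geometric step in the middle takes a genuinely different (and flawed) route. The paper does not try to cancel a local max–min pair on a crossing-free subarc of $s$. Instead it takes three consecutive intersection points $a,b,c$ of $l\cap s$, notes that the lifts of the subarcs $s_{ab}$ and $s_{bc}$ each cobound a disk with the level plane $p^{-1}(l)$ (one in each half-space $R_\pm$), and performs an isotopy across such a disk to remove two intersections with $p^{-1}(l)$; the existence of a suitable (e.g.\ innermost) disk and the isotopy are deferred to Lemma~5.4 of \cite{lee2019algorithmic}.

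The gap in your version is the claim, on which the whole isotopy rests, that one ``can choose a sub-arc $s'\subset s$ containing exactly one local min and one adjacent local max of $h|_s$ and no crossing in its interior (shrinking $s'$ if necessary).'' That is not justified. A strand $s$ can be the over-strand at arbitrarily many crossings in its interior, and some of those crossings can sit strictly between the adjacent max and the adjacent min that you want to cancel; you cannot shrink $s'$ past them without losing one of the two critical points. If such a crossing is present, the naive cancellation isotopy would push $s'$ below (or above) the under-strand at that crossing and change the crossing, hence the link type. The cobounded-disk argument avoids this precisely because it does not require a crossing-free tube around the subarc; one takes an innermost disk, which is automatically disjoint from the rest of $L$. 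So either you need to make that innermost-disk argument explicit (which is what the cited lemma does), or drop the ``no crossings in $s'$'' claim and replace it with a correct disjointness argument.

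A secondary, smaller issue: your max-width paragraph is not really an argument. The assertion that ``the maximum entry is attained, if anywhere, at a level that is not strictly between the cancelled min and max'' is neither obviously true nor sufficient. The paper's route here is to bound the max-width by twice the number of maxima of the embedding and to observe the isotopy decreases that count by one; you should either adopt that bound or give an explicit level-by-level comparison showing the maximal intersection number strictly drops.
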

\begin{proof}
    Suppose for contradiction that there is a regular level contributing $|l \cap s| > 2$. We will find another diagram for $L$ with strictly lower lex-width. Take three consecutive points $a,b,c$ of $|l \cap s|$. Let $s_{ab}$ denote the subarc of $s$ connecting $a$ to $b.$ The arc $s_{bc}$ is defined similarly.

    In any case, $p^{-1}(l)$ divides $\mathbb{R}^3$ into two sides $R_+,$ and $R_-$. Furthermore, there is a subarc $p^{-1}(s_{ab})$ of $L$ that cobounds a disk with $p^{-1}(l)$ in $R_+,$ and there is a subarc $p^{-1}(s_{bc})$ of $L$ that cobounds a disk with $p^{-1}(l)$ in $R_-$. An isotopy can then be performed, which removes two intersection points of $p^{-1}(l)$ with $L$ (see Figure \ref{fig:decompose}). More details on the existence of such disks can be found in  in \cite{lee2019algorithmic}, where it was argued that this reduces the Gabai width. We now argue that it reduces the max-width and lex-width as well.

    Observe that twice the max-width is at most the bridge number. Since the isotopy decreases the bridge number by one, it decreases the max-width by 2. 

    To see that the lex-width also decreases after the move, we observe that the sequence used to calculate the lex-width has two fewer terms and changes from 
    \begin{center}
            $\lbrace 2,\cdots ,a_i-2,a_i,a_{i}+2,a_{i}, \cdots, 2\rbrace$
    \end{center}
to a sequence $\lbrace 2,\cdots ,a_i-2,a_i,a_{i}-2, \cdots, 2\rbrace$, which is strictly less after rearranging the terms in decreasing order. More specifically, suppose that $S$ and $S'$ are the decreasing sequences before and after the move, respectively. Then, there is a smallest index $j$ in the sequence, where the $j$-th term in $S$ is $a_i+2$, and the $j$-th term in $S$ is $a_i.$
\end{proof}

\begin{figure}[ht!]
\centering
\includegraphics[width=5cm]{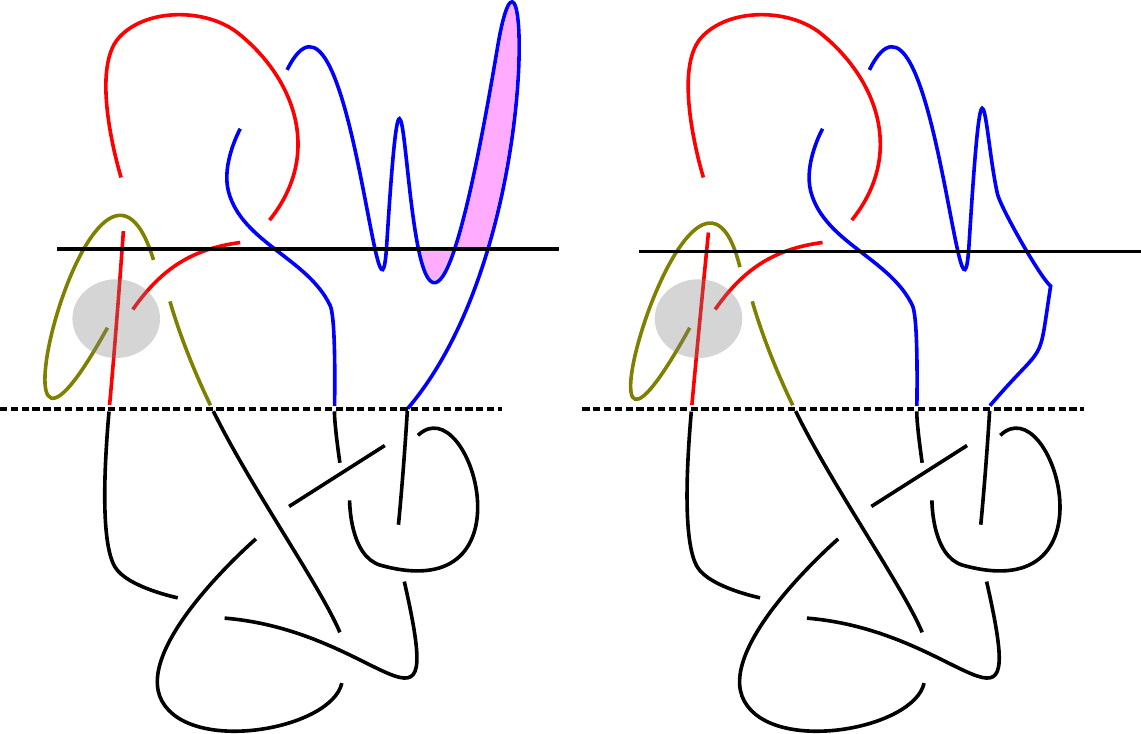}
\caption{There are three local minima, but there is only one special crossing. However, Lemma \ref{lem:nowiggle} shows that if this happens then $L$ is not in thin position with respect to any notion of widths.}\label{fig:decompose}
\end{figure}

From now on, we make the assumption that for any level $h^{-1}(z)$ and any strand $s$, we have $|h^{-1}(z)\cap s|\leq 2$. Notice that the isotopies of the link required to arrange this only require that our embedding is Morse.

\begin{lem}\label{mins}
		Let $(A_0,f_0)\rightarrow \ldots \rightarrow (A_J,f_J)$ be a completed coloring sequence on $D$ obtained via coloring be height. Let $t$ be any stage in the coloring and $r$ a regular value of $h|_L$ such that $h^{-1}(r,\infty)\cap D = A_t$ i.e every strand above $h^{-1}(r)$ has been colored but no strand below has been colored by stage $t$. Then the number of crossings that have become special by stage $t$ is at least the number of minima above $h^{-1}(r)$. 
	\end{lem}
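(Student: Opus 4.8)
The plan is to extract two facts from the coloring-by-height process and then run a counting argument. Fix the regular value $r$ of the statement and write $N_t$ for the number of crossings that have become special by stage $t$. The first fact I would establish is that the number of colors present at stage $t$ equals the number $M$ of local maxima of $h|_L$ lying above $h^{-1}(r)$. A new color is introduced only by a seed --- the initial seed or a seed addition --- since a coloring move merely propagates an existing color; and in a coloring obtained by coloring by height a strand is seeded exactly when $h$ restricted to it attains its maximum in the interior. By Lemma~\ref{lem:nowiggle} every strand meets every level in at most two points, so an interior-maximum strand carries exactly one local maximum of $h|_L$ and has that point's height. Since $A_t$ is precisely the set of strands of height greater than $r$, the interior-maximum strands in $A_t$ are in bijection with the local maxima of $h|_L$ above $h^{-1}(r)$; here I use the standing genericity assumptions (distinct heights for strands and for strand endpoints, and, after a further small perturbation, no crossing at the height of a critical point of $h|_L$).

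The second fact is that every point above $h^{-1}(r)$ at which the color along $L$ changes is forced to be a special crossing that has appeared by stage $t$. The color along $L$ can change only where $L$ runs under a crossing, because traversing an overstrand does not terminate a strand. If such a change occurs at a crossing $x$ with $h(x)>r$, then each understrand of $x$ has an endpoint at $x$, hence height at least $h(x)>r$, hence lies in $A_t$; the two understrands carry different colors; and the overstrand of $x$ also has height at least $h(x)>r$ and so is colored. Thus $x$ satisfies the definition of a type~I crossing that is special at stage $t$. As distinct color-change points sit at distinct crossings, the number of color changes strictly above $h^{-1}(r)$ is at most $N_t$.

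To conclude I would count. Write $L\cap h^{-1}(r,\infty)$ as a disjoint union of arcs, temporarily assuming no component of $L$ lies entirely above $h^{-1}(r)$; the usual Morse bookkeeping (an arc carrying $p$ local maxima also carries $p-1$ local minima) shows there are $M-m$ of them, where $m$ is the number of local minima of $h|_L$ above $h^{-1}(r)$. Every color present at stage $t$ occupies at least one entire strand of $A_t$, since a strand receives its color all at once; that strand reaches above $h^{-1}(r)$, and as the color forms a connected subset of $L$ (Proposition~\ref{prop:connected}) it therefore meets at least one of the $M-m$ arcs. On each of these arcs the colors appear in maximal monochromatic subarcs, so the number of color changes inside the arc is at least the number of colors meeting it, minus one. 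Summing over the $M-m$ arcs, and using that each of the $M$ colors meets at least one of them, gives at least $M-(M-m)=m$ color changes above $h^{-1}(r)$, so by the second fact $N_t\ge m$, which is the claimed inequality.

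The step I expect to demand the most care is the degenerate case set aside above: when some component $C$ of $L$ lies entirely above $h^{-1}(r)$, the set $L\cap h^{-1}(r,\infty)$ has a circle component rather than an arc, and if $C$ has only one local maximum it carries no color change at all, so the Morse count must be reorganized component by component. This is precisely the configuration that type~II crossings were introduced to detect: when the last strand of a one-colored component is colored, the crossing at its final endpoint has its overstrand and both (identically colored) understrands colored by stage $t$ and, lying on $C$ and hence above $h^{-1}(r)$, it is the special crossing accounting for the unique minimum of $C$; a component with two or more maxima is instead seen as a circle cut into its monochromatic subarcs, a circle with $j\ge 2$ such subarcs having $j$ color changes and hence contributing at least as many type~I special crossings as it has minima. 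Matching these per-component contributions against the per-component minimum counts restores $N_t\ge m$ in full generality, and it is this bookkeeping --- with its asymmetric use of type~I and type~II crossings --- that is the only genuinely delicate point of the argument.
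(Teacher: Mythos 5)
Your proof is correct, and it takes a genuinely different route from the paper's. The paper argues strand by strand: it fixes a minimum strand $s$ above $h^{-1}(r)$ and, through a case analysis on whether the component $K_s$ contains three adjacent strands with the middle one colored last, locates a specific special crossing (the lower crossing of $s$, or one of the special crossings on $D_s$) that has appeared by stage $t$; the fact that distinct minima yield distinct special crossings is left implicit there. You instead run a global counting argument: you identify the number of colors present at stage $t$ with the number of maxima above $h^{-1}(r)$ (using that coloring-by-height seeds are precisely interior-maximum strands and, by Lemma~\ref{lem:nowiggle}, each carries one maximum), observe that every color change on $L$ above $h^{-1}(r)$ occurs at a type I special crossing visible by stage $t$, and then count color changes along the arc components of $L\cap h^{-1}(r,\infty)$ via an Euler-characteristic bookkeeping, with type II crossings absorbing the one-colored circle components. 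The advantages of your route are that distinctness of the special crossings falls out automatically (different color changes live at different crossings, and crossings on circle components are disjoint from those on arc components, since both understrands of a crossing lie on a single component), and that the argument avoids the somewhat delicate case split of the paper; the paper's per-minimum analysis, on the other hand, is more local and aligns more directly with the converse construction in Section~\ref{sec:embeddingfromcoloring}. One small point to be explicit about if you write this up: the claim that each color meets a given arc in a single interval (so a $k$-colored arc has exactly $k-1$ interior color changes) uses Proposition~\ref{prop:connected} together with the observation that every strand meeting $L\cap h^{-1}(r,\infty)$ has height $>r$ and hence lies in $A_t$, so the arc is entirely colored at stage $t$; you invoke this but it is worth stating, since without it a color could a priori appear on an arc in several pieces separated by uncolored gaps.
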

	
	\begin{proof}
		Let $s$ be a strand of $D$ corresponding to a minimum of $h|_L$. Recall $p$ denotes the projection $p(x,y,z)=(0,y,z)$ into the yz-plane. Let $K_s$ be the $S^1$ component of $L$ containing $p^{-1}(s)\cap L$. Say $s\in A_t$ (so $s$ has been colored by stage $t$) and let $D_s$ be the subset of $D$ corresponding to $p(K_s)$.
		
		\textbf{\underline{Case 1:}} Suppose there exists adjacent strands $s_1,s_2,s_3\in D_s$ and a stage $u$ such that $\{s_1,s_3\}\subset A_u$ and $A_{u+1}\setminus A_u=\{s_2\}$ with $f_u(s_p)=f_u(s_r)$ (colloquially, we are saying there are three adjacent strands where the middle strand is the last to get colored, and $u+t\leq t$ is the stage at which $s_2$ gets colored). When coloring by height, only strands with maxima become seeds. Hence in this case, the stated assumption, along with the connectedness of colors and basic Morse theory (i.e. $K_s \cong S^1$ has Euler characteristic zero), implies that one of the two following possibilities arise: either $h|_{K_s}$ has exactly 1 maximum and 1 minimum, or $h|_{K_s}$ has exactly 2 maxima and 2 minima. By the connectedness of colors, this means that once the coloring is completed, either $D_s$ contains one type II special crossing (corresponding to the sub-case where $h|_{K_s}$ has a single maximum and single minimum), or $D_s$ has two type I special crossings (corresponding to the sub-case where $h|_{K_s}$ has exactly 2 maxima and 2 minima).  
		
		\textbf{\underline{Case 1 Sub-case 1:}}
		Suppose all of $D_s$ has been colored by stage $t$. The connectedness of colors implies that $s_2$ must be the final strand of $D_s$ to be colored. Moreover, $s_2$ lies above $h^{-1}(r)$, so that all of $D_s$, and all strands forming over-strands of crossings with strands from $D_s$ as under-strands, must have been colored by stage $t$. Hence, all the special crossings arising from the coloring of $D_s$ have become special by stage $t$.
		
		\textbf{\underline{Case 1 Sub-case 2:}} Now suppose not all of $D_s$ has been colored by stage $t$. By assumption, $D_s$ contains the strand $s$, which corresponds to a minimum that lies above $h^{-1}(r)$. Hence, in this sub-case $h|_{K_s}$ must have 2 minima, and $s$ corresponds the minimum lying above $h^{-1}(r)$ while there is one other minimum lying below $h^{-1}(r)$. We want to show there is one crossing in $D_s$ that has become type I special by stage $t$.
		
		If $s$ received its color via a seed addition, then $h|_s$ contains both a max and and min in the interior of $s$. Since $|h^{-1}(z)\cap s|\leq 2$ for all $z$, then no coloring move could have been performed over the lower crossing of $s$. Hence, the lower crossing becomes type I special by stage $t$.
		
		Now suppose $s$ received its color via a coloring move. Let $x_i$ and $x_j$ denote the crossings adjacent to $s$, with $x_i$ corresponding to the lower endpoint of $s$. Then $s$ must have inherited its color via a coloring move performed over $x_j$. Moreover, since $|h^{-1}(z)\cap s|\leq 2$ for all $z$, no coloring move could have been performed over $x_i$. Since $s$ corresponds to the "higher" minimum of $h|_{K_s}$, we conclude that the strand adjacent to $s$ at $x_i$ must have received a different color than $s$ in the coloring sequence. The assumption that $s$ corresponds to a minimum that lies above $h^{-1}(r)$ implies that the over-strand and the other under-strand of $x_i$ also lies above $h^{-1}(r)$. Hence, $x_i$ becomes type I special by stage $t$.
		
		\textbf{\underline{Case 2:}} Suppose there are no three adjacent strands where the middle is the last to get colored. We proceed using the same reasoning as in the converse direction of Proposition 5.5 part (2) in \cite{lee2019algorithmic} to show that the crossing corresponding to the lower endpoint of $s$ becomes special by stage $t$.
		
		Let $x_i$ and $x_j$ be the crossings corresponding to the endpoints of $s$, with $x_i$ being at the lower endpoint. We claim $x_i$ becomes special by stage $t$.
		
		Let $s_i$ and $s_j$ be the strands adjacent to $s$ at $x_i$ and $x_j$, respectively. Let $s_k$ denote the other strand adjacent to $s_i$. Let $u<t$ be the stage at which $s$ receives its color. Note that since we colored by height and $h|_s$ has a minimum in the interior of $s$, the over-strands of $x_i$ and $x_j$, in addition to $s_i,s_j$ and $s_k$, have all been colored by stage $t$. Suppose for contradiction $x_i$ is not special, meaning that $s_i$ and $s$ have received the same color by stage $t$. 
		
		\textbf{\underline{Sub-case 1:}} In this sub-case, we suppose that $s$ received its color via a seed addition. Since $s_i$ and $s$ have the same color and $s$ is the seed corresponding to its color, $h|_{s_i}$ cannot have a maximum in the interior of $s_i$, as this would mean $s_i$ would be another seed strand, necessarily of a different color than $s$. Since $\text{max}(|h^{-1}(z)\cap s_i|,|h^{-1}(z)\cap s|)\leq 2$ for all $z$, $h|_s$ has a minimum in its interior, and $x_i$ is the lower crossing of $s$, then $s_i$ must be monotonic with respect to $h$. Hence $s$ inherits it's color from $s_k$ via a coloring move $(A_l,f_l)\rightarrow (A_{l+1},f_{l+1})$. But then $\{s_k,s\}\subset A_l$, contradicting the initial assumption of case 2 as $s_k,s_i,$ and $s$ are three adjacent strands.
		
		\textbf{\underline{Sub-case 2:}} In this sub-case, we suppose that $s$ received its color via a coloring move $(A_u,f_u)\rightarrow (A_{u+1},f_{u+1})$. Since $x_i$ is the lower crossing of $s$, this coloring move must have been performed over $x_j$. By assumption, of the three adjacent strands $s_i,s$ and $s_j$, $s$ cannot be the last of the three to be colored. Hence, $s_i\notin A_{u+1}$.
		
		This means there exists $u<l<t$ such that $A_{l+1}\setminus A_l=\{s_i\}$. Moreover, $(A_l,f_l)\rightarrow (A_{l+1},f_{l+1})$ must have been a coloring move since $s_i$ and $s$ receive the same color, on account of $x_i$ not being special. No coloring move could have been performed over $x_i$ since $h|_s$ has a minimum in its interior and $x_i$ is the lower crossing of $s$. Therefore, $s_i$ inherits its color from $s_k$, meaning $\{s_k,s\}\subset A_l$, a contradiction to the initial assumption of case 2 as $s_k,s_i$ and $s$ are three adjacent strands.	
	\end{proof}

\begin{prop}\label{prop:colorbyheight}
		For any link $L$, $\mathbb{WL}(L)$ lower bounds the lex-width of $L$. 
	\end{prop}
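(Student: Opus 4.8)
The plan is to fix an embedding of $L$ realizing $\mathrm{lex}(L)$ for the standard height function $h$, project it to the $yz$-plane, run the coloring-by-height procedure on the resulting diagram $D$, and show that the Wirtinger lex-width of the completed coloring sequence so produced is lexicographically no larger than $\mathrm{lex}(L)$. Because $\mathbb{WL}(L)$ is the infimum of the Wirtinger lex-width over all diagrams and all completed coloring sequences, exhibiting a single such sequence suffices. Running the same argument from an embedding realizing $w(L)$ (resp.\ $\mathrm{trunk}(L)$) and comparing sums (resp.\ maxima) in place of the lexicographic order gives the sum-width (resp.\ max-width) version.

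I would first normalize: by Lemma~\ref{lem:nowiggle} and the remark following it, assume $|h^{-1}(z)\cap s|\le 2$ for every strand $s$ of $D$ and every regular value $z$, and after a small perturbation assume that all strands, and all strand endpoints, have pairwise distinct heights. Then every strand is monotone, has a single local maximum of $h$ in its interior (call it an M-strand), or has a single local minimum of $h$ in its interior, and the M-strands are in bijection with the maxima of $h|_L$, the highest being $s_0$. I would then take the coloring-by-height sequence $(A_0,f_0)\to\cdots\to(A_J,f_J)$ with the stated properties — strands colored in order of decreasing height, each M-strand other than $s_0$ colored by a seed addition and every other strand colored by a coloring move — fix a $\Delta$-ordering, and let $\{a_i\}$ be the associated Wirtinger lex-width.

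The heart of the proof is a stagewise comparison of $\{a_i\}$ with the level-intersection count $n(r)=|h^{-1}(r)\cap L|$. Given a stage $t$, pick a regular value $r_t$ just below $h(s_t)$, so that $A_t$ is exactly the set of strands lying above $h^{-1}(r_t)$. The seed additions performed by stage $t$ are precisely the M-strands among $s_1,\dots,s_t$, so they correspond bijectively to the maxima of $h|_L$ above $h^{-1}(r_t)$ other than the one belonging to $s_0$; and by Lemma~\ref{mins} the number of special crossings that have appeared by stage $t$ is at least the number of minima of $h|_L$ above $h^{-1}(r_t)$. Substituting these into $a_0=2$ and the update rules of Definition~\ref{def:wirtlexwidth}, the value $a^{(t)}$ of the walk $\{a_i\}$ at the end of stage $t$ obeys
\[
a^{(t)} \;\le\; 2\,\#\{\text{maxima above } h^{-1}(r_t)\} - 2\,\#\{\text{minima above } h^{-1}(r_t)\} \;=\; n(r_t);
\]
the lone intermediate value $a^{(t-1)}+2$ that can occur at a stage carrying a seed addition is bounded the same way, using the level just above (rather than below) the maximum processed at that stage. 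Hence every $a_i$ is at most $n(r)$ for some regular value $r$, i.e.\ at most some entry of $\mathrm{lex}(L)$; in particular $\max_i a_i\le \mathrm{trunk}(L)$, which is the max-width statement, and running the same inequalities along the whole walk bounds $\sum_i a_i$ by $w(L)$, which is the sum-width statement.

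It remains to upgrade these pointwise bounds to the lexicographic inequality. The natural route is to examine, for each even threshold $v>2$, the indices $i$ with $a_i\ge v$: these form excursions of the $\pm2$ walk $\{a_i\}$ above $v$, each occurring over a contiguous block of stages whose levels $r_t$ all satisfy $n(r_t)\ge v$ and therefore lie in one maximal run of regular levels on which $n\ge v$; checking that an excursion is no longer than the number of levels in its run, and that distinct excursions use distinct runs, gives $a_{(k)}\le w_k$ for all $k\le 2M-1$ (with $w_1\ge\cdots\ge w_{2M-1}$ listing $\mathrm{lex}(L)$), which together with a verification that $\{a_i\}$ does not agree with $\mathrm{lex}(L)$ on its entire top yields $\{a_i\}\le_{\mathrm{lex}}\mathrm{lex}(L)$ and hence $\mathbb{WL}(L)\le\mathrm{lex}(L)$. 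I expect this last step to be the main obstacle: Lemma~\ref{mins} bounds the special crossings only from below, so the walk $\{a_i\}$ can be strictly longer than the lex-width sequence, and the excursion-to-run matching must be done with care to avoid overcounting when several stages fall inside one level-gap while the walk dips below $v$ in between. A secondary technical point, settled along the way, is the induction guaranteeing that a coloring-by-height sequence with the stated properties exists at all; the cut-split diagrams, where it can break down, are excluded here and dealt with separately in Subsection~\ref{subsec:cutsplit}.
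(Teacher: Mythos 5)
Your proposal tracks the paper's proof closely: both start from a lex-width-realizing embedding, project to the $yz$-plane, normalize via Lemma~\ref{lem:nowiggle} so that $|h^{-1}(z)\cap s|\le 2$ for every strand, run the coloring-by-height procedure, invoke Lemma~\ref{mins} to lower-bound the special crossings by the minima lying above a given regular level, and compare the $\pm 2$ telescoping sums (equations~\eqref{wirtsum} and \eqref{trunksum} in the paper) against the level-intersection counts. The only real point of divergence is how the argument is finished. The paper fixes an index $k$, picks the regular level $r$ determined by the stage at which $a_k$ first appears, and asserts that proving $a_k\le|h^{-1}(r)\cap L|$ for every $k$ ``completes the proof''; the implicit justification is that $r$ varies weakly monotonically in $k$ and that every $+2$ step of the walk is forced by crossing a new maximum, so the sorted walk is dominated entry-by-entry by the sorted level-count sequence. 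You instead treat this passage from the pointwise bounds to the lexicographic ordering as the hard remaining step, propose an explicit excursion-versus-run matching, and explicitly flag it as not yet carried out. That caution is reasonable --- the paper is terse here --- but it is the same inequality reached more laboriously, not a genuinely different route, so as written your proof has a declared gap exactly where the paper is least detailed. One small inaccuracy: you say cut-split diagrams are ``excluded here and dealt with separately,'' but the cut-split case only enters the converse direction (Section~\ref{sec:embeddingfromcoloring}); nothing in the proof of this proposition sets them aside.
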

	
	\begin{proof}
		 Let $(A_0,f_0)\rightarrow \ldots \rightarrow (A_J,f_J)$ be a completed coloring sequence of $D$ obtained via coloring by height. Let $(a_i)_{i=0}^N$ denote the sequence associated to the coloring, indexed in the order dictated by the stage of the coloring in which they occur. Fix any $a_k$ in the associated sequence. Let $(A_0,f_0)\rightarrow \ldots \rightarrow (A_t,f_t)$ be the partial coloring sequence where $t$ is the first stage at which $a_k$ appears. 
		
		Since we color by height, there exists a regular level $r$ where $h^{-1}((\infty, r))\cap D=A_t$, i.e. a regular level surface where all the strands above $r$ have been colored but no strand below has been colored, by stage $t$. The proof of this Proposition is completed by verifying the following Claim:

			\textbf{Claim:} $a_k\leq |h^{-1}(r)\cap L|$.

		Put $a_{-1}=0$ and write 
		\begin{equation}\label{wirtsum}
		a_k=\sum_{i=0}^{k}a_i-a_{i-1}.
		\end{equation}
		Let $h^{-1}(r_0),\ldots ,h^{-1}(r_m)$ be level surfaces, listed in order of decreasing height, such that we have exactly one level between each adjacent pair of critical points above $h^{-1}(r)$. Moreover, let $h^{-1}(r_m)=r^{-1}(r)$. Put $|h^{-1}(r_{-1})\cap L|=0$ and write 
		\begin{equation}\label{trunksum}
		|h^{-1}(r)\cap L| = \sum_{i=0}^m|h^{-1}(r_i)\cap L| - |h^{-1}(r_{i-1})\cap L|.
		\end{equation}
		
		Notice that the equations on the right hand side of (\ref{wirtsum}) and (\ref{trunksum}) are a finite sums of $+2$'s and $-2$'s. In the summation for equation (\ref{trunksum}), there is a single +2 for every maximum above $h^{-1}(r)$, and a single -2 for every minimum above $h^{-1}(r)$. 
		
		In equation (\ref{wirtsum}), it follows from our definition of our completed coloring sequence that there is a single $+2$ for every strand $s$ with height greater than $r$ where $h|_s$ achieves its maximum in the interior of $s$. That means the number of positive terms in the summation for (\ref{wirtsum}) is bounded above by the number of positive terms in the summation for (\ref{trunksum}).
		
		The number of $-2$'s in the summation of equation (\ref{wirtsum}) is equal to the number of crossings that have become special by stage $t$. By Lemma \ref{mins}, every minimum above $h^{-1}(r)$ must have resulted in a special crossing in our partial coloring sequence. Therefore, the summation in equation (\ref{wirtsum}) has at least as many negative terms as the summation in equation (\ref{trunksum}). This gives the desired claim. 	
	\end{proof}
	
\begin{cor}
    For any link $L$, $\mathbb{WT}(L) \leq trunk(L)$ and $\mathbb{WS}(L) \leq w(L)$. 
\end{cor}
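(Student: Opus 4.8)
The plan is to rerun the coloring-by-height argument of Proposition~\ref{prop:colorbyheight}, reading off the maximum, respectively the sum, of the associated sequence instead of comparing multisets lexicographically. Since Lemma~\ref{lem:nowiggle} is stated for max-width and Gabai width as well, we may again assume $|h^{-1}(z)\cap s|\le 2$ for every strand $s$ and every level $z$. Fix an embedding realizing $\mathrm{trunk}(L)$, respectively $w(L)$, project it to the $yz$-plane to obtain a diagram $D$, and form the completed coloring sequence $(A_0,f_0)\to\cdots\to(A_J,f_J)$ by coloring by height, with associated sequence $(a_i)_{i=0}^N$. Cut-split diagrams are handled separately, as elsewhere.

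For the trunk the conclusion is immediate from what is already proved: the Claim inside the proof of Proposition~\ref{prop:colorbyheight} produces, for each index $k$, a regular level $r$ of $h|_L$ with $a_k\le |h^{-1}(r)\cap L|$. Taking the maximum over $k$ gives $\max_k a_k\le \sup_z |h^{-1}(z)\cap L|$, and the right-hand side is the trunk of the chosen embedding, which equals $\mathrm{trunk}(L)$. Hence the Wirtinger trunk of this one coloring sequence is at most $\mathrm{trunk}(L)$, and minimizing over coloring sequences and then over diagrams yields $\mathbb{WT}(L)\le\mathrm{trunk}(L)$.

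For the Gabai width I would prove the sharper statement that the sequence $(a_i)$, re-ordered into a weakly decreasing tuple, is dominated term-by-term by the weakly decreasing tuple of intersection numbers $|h^{-1}(r)\cap L|$ of the regular levels of the embedding; summing then gives $\mathbb{WS}(D)=\sum_i a_i\le w(\text{embedding})=w(L)$, and minimizing over diagrams gives $\mathbb{WS}(L)\le w(L)$. To set up the term-by-term comparison one pairs the $+2$-steps and $-2$-steps of the walk $(a_i)$ with the $+2$-steps (maxima) and $-2$-steps (minima) of the intersection-number walk. The two walks have equally many $+2$-steps: when coloring by height the seeds are exactly the strands with an interior maximum, and after the generic perturbation each maximum of $h|_L$ is the interior maximum of a unique such strand, so the number of seed additions equals the number of maxima. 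They also have equally many $-2$-steps: by Proposition~\ref{prop:connected} every color is a connected sub-arc of $L$, so the type I special crossings are exactly the points where two consecutive strands of $L$ carry different colors, and one type II special crossing is contributed by each monochromatic component; counting the resulting color boundaries around each component (each multicolored component carries as many colors, hence as many minima, as color boundaries, and each monochromatic component has exactly one minimum) shows that the number of special crossings equals the number of minima. Feeding these counts into the partial-sum estimate already established in Proposition~\ref{prop:colorbyheight}, namely that after any initial segment of the $\Delta$-ordering the current value of $(a_i)$ is at most the intersection number at the corresponding regular level, yields the desired term-by-term domination.

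The step I expect to be the main obstacle is the last one. The two walks are indexed differently: the coloring advances strand-by-strand in order of decreasing value of $\sup_{y\in s} h(y)$, whereas the intersection number changes one critical point at a time, so a single coloring stage may witness a maximum together with several minima, and there may be more $\Delta$-ordering steps within a stage than there are regular levels in the corresponding height interval. Distributing the excess steps to the regular levels of later stages and verifying that the resulting matching is compatible with the partial-sum estimate, so that the $j$-th entry of the sorted Wirtinger tuple really is bounded by the $j$-th entry of the sorted intersection tuple, is the delicate bookkeeping; it is the analog for links of the argument carried out for knots and the Gabai width in Proposition~5.5 of \cite{lee2019algorithmic}.
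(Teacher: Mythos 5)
The paper states this corollary with no proof at all---it is presented as an immediate byproduct of the argument for Proposition~\ref{prop:colorbyheight}, exactly as flagged at the start of Section~\ref{sec:coloringfromembedding} (``with minor adaptations\dots''). So the right comparison is whether your adaptations are correct and complete.

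For the trunk your argument is clean and is precisely what the paper leaves to the reader: start from a trunk-minimizing embedding, color by height, and use the Claim inside the proof of Proposition~\ref{prop:colorbyheight} to get $a_k\le |h^{-1}(r(k))\cap L|$ for each $k$, hence $\max_k a_k\le \sup_z|h^{-1}(z)\cap L|=\mathrm{trunk}(L)$. (One could also observe that the lex-inequality of Proposition~\ref{prop:colorbyheight} already bounds the first entry of the sorted Wirtinger multiset, which is the Wirtinger trunk, so the trunk statement follows formally from the proposition; your direct route is equally good.)

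For the Gabai width you are right that the lex-inequality does \emph{not} formally imply the sum-inequality, so a genuine adaptation is needed, and your proposed route (sorted term-by-term domination, then sum) would certainly suffice. Your counting that, under coloring by height, the number of seed additions equals the number of maxima and the number of special crossings equals the number of minima is correct and is the key new observation beyond Proposition~\ref{prop:colorbyheight}. However, you then explicitly leave open the step you call the ``main obstacle,'' and as written the proof is incomplete there: you have not produced the injection from Wirtinger-sequence indices to regular levels (several special crossings can occur at one coloring stage and hence be assigned the same level $r$, as you note), nor verified the sorted term-by-term bound. Note also that term-by-term domination of the sorted tuples is strictly stronger than what the sum-width needs; the more economical route is to use the equality of step counts to argue that the Claim's assignment $k\mapsto r(k)$ can be refined to an injection into the set of intermediate regular levels, and then simply sum the inequalities $a_k\le|h^{-1}(r(k))\cap L|$. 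Until that bookkeeping is supplied (it is the link analog of Proposition~5.5 of~\cite{lee2019algorithmic}, as you say), the $\mathbb{WS}(L)\le w(L)$ half of your proof has a real gap.
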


\section{Embedding from link diagram coloring}\label{sec:embeddingfromcoloring}

\main*

\begin{proof}
The Wirtinger lex-width was shown to be less than the lex-width in Proposition \ref{prop:colorbyheight}. We now show the other direction of the inequality. We prove the theorem by induction on $m$, the number of components of $L$.

Suppose there are $N$ strands total. Embed each strand in the plane $y = c$ in 3-space, where $c$ is a constant determined by the coloring sequence in decreasing height. More precisely, say the beginning seeds are denoted $s_1,...,s_k$. Then, embed $s_i (i=1,...,k)$ in the plane $y= -i.$ After all the beginning seeds are embedded, we embed the next strands that receive the colors next in order (this can be from a coloring move or seed addition) in the plane $y = -(k+1), y=-(k+2),...$ and so on until the final strand in the coloring sequence is embedded on $y=-N.$ To be more concrete, the purple seed strand from Figure \ref{fig:wirtwidth} gets embedded in level $y = -9$.

At the moment, we have disconnected strands floating in 3-space. We must connect them up to form a link. Furthermore, we must show that the link type agrees with the diagram it was lifted from. Denote the overstrand at a crossing $x$ as $a_k,$ and the understrands as $a_i$ and $a_j.$ We now divide into two main cases depending on whether $L$ is a knot or a link.\\

\textbf{\underline{Case 1:}} $L$ has 1 component so that $L$ is a knot.\\

\textbf{\underline{Case 1 Sub-case 1:}} 
Suppose that $a_i$ and $a_j$ receive the same color at the end of the coloring process. Since $L$ is a knot, this means the overstrand $a_k$ cannot be colored last among $\{a_i,a_j,a_k\}$. Because if so, one could trace out a link component, which contradicts the assumption that $L$ is a knot.

We can then connect an endpoint of $a_i$ to an endpoint of $a_j$ by a monotonic arc so that when we project downward, we get the crossing $x$ in the link diagram. This creates no new maxima or minima, and this can be done because $a_k$ is already higher than both $a_i$ and $a_j$ with respect to the height function.

\textbf{\underline{Case 1 Sub-case 2:}} Suppose that $a_k$ is embedded in a plane lower than $a_i$ and $a_j$. We connect an endpoint of $a_i$ to an endpoint of $a_j$ by an arc that creates a minimum so that when we project downward, we get a multi-crossing $x$ in the link diagram. 

Each seed can be perturbed to give a maximum. The process so far is like the ones in \cite{blair2020wirtinger,pongtanapaisan2019wirtinger}. But now, the difference pointed out in \cite{lee2019algorithmic} is if a special crossing $x$ appears before a seed addition $s$, then we embed the minimum corresponding to $x$ higher than the maximum corresponding to $s.$\\

\textbf{\underline{Case 2:}} $L$ has more than 1 components.\\
Assume that the Wirtinger width of $L'$ equals its width, where $L'$ is a link of fewer than $m$ components. First, we consider the case that $L$ has a diagram with realizing the Wirtinger width, where $D$ is
not cut-split (see Subsection \ref{subsec:cutsplit}). 

\textbf{\underline{Case 2 Sub-case 1:}} 
In this case, we can have a phenomenon not present in Case 1. Namely, it is possible for $a_i$ and $a_j$ to receive the same color, but $a_k$ is colored after $a_i$ and $a_j.$ Say, this happens at the crossing $x$. In this case, we get an unknot component $U$ traced out by one color, which is a special crossing of type II. As pointed out in \cite{blair2020wirtinger}, we get a local minimum correspond to this color. Now, if the strand $a_k$ corresponding to $x$ gets colored before a seed addition $s,$ then we embed the minimum corresponding to $x$ higher than the maximum corresponding to $s.$

If $a_i$ and $a_j$ receive the same color at the end of the coloring process, but the height of $a_k$ is higher than the minimum heights among $\{a_i,a_j\},$ we can still make a connection that creates no new local maximum or minimum just as Case 1 Subcase 1. On the other hand, if $a_k$ is embedded in a plane lower than $a_i$ and $a_j$, where $a_i$ and $a_j$ received different colors, then this gives a local minimum like Case 1 Subcase 2. 

\textbf{\underline{Case 2 Sub-case 2:}} 

Assume both $D$ and $L$
are cut-split with splitting component $U$. That is, $U$ projects to a self-adjacent strand or a simple closed curve in $s(D)$. Let $D'$ be a diagram obtained by removing the self-adjacent strand or a simple closed curve.

The statement follows from the following string of inequalities. The first equality comes from observing that adding a splitting component changes the sequence by adding two particular numbers into the associated sequence. The second equality comes from the inductive hypothesis, where we have one fewer component. The final inequality can be seen by noticing that such an associated sequence from the coloring with two fewer terms of $L\backslash U$ once a splitting component is removed exists. 

\begin{align*}
    \text{lex-width}(L) &= \text{lex-width}(L\backslash U) \cup \{a_i,a_{i}-2\} \\ &= \mathbb{WL}(L\backslash U) \cup \{a_i,a_{i}-2\} \\ &\leq \mathbb{WL}(L).
\end{align*}

\end{proof}

\begin{cor}
    For any link $L$, $\mathbb{WT}(L) = trunk(L)$ and $\mathbb{WS}(L) = w(L)$. 
\end{cor}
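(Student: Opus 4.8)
The plan is to prove the final statement, namely that $\mathbb{WT}(L) = \mathrm{trunk}(L)$ and $\mathbb{WS}(L) = w(L)$ for every link $L$, by re-running the argument of Theorem~\ref{thm:main} while tracking only the relevant single statistic of the associated multi-set instead of its lexicographic order. In one direction the corollary already at the end of Section~\ref{sec:coloringfromembedding} supplies $\mathbb{WT}(L)\le \mathrm{trunk}(L)$ and $\mathbb{WS}(L)\le w(L)$, so the work is the reverse inequalities $\mathrm{trunk}(L)\le \mathbb{WT}(L)$ and $w(L)\le \mathbb{WS}(L)$. First I would fix a diagram $D$ of $L$ and a completed coloring sequence realizing $\mathbb{WT}(D)$ (respectively $\mathbb{WS}(D)$), together with a $\Delta$-ordering, and feed it into the embedding construction used in the proof of Theorem~\ref{thm:main}: embed each strand in a plane $y=-i$ in the order prescribed by the $\Delta$-ordering, create a maximum for each seed, a minimum for each special crossing, and connect same-colored strands by monotone arcs. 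By the Convention stated just before Definition~\ref{def:wirtlexwidth}'s surrounding material, the embedding produced is literally the same one; only the bookkeeping changes.

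The key observation to record is that the multi-set of regular-level intersection numbers of the resulting embedding is, up to the rearrangement already discussed, exactly the multi-set $\{a_i\}$ attached to the coloring sequence: each $+2$ in the $a$-sequence corresponds to passing a maximum and each $-2$ to passing a minimum, and the construction places these critical points in the height order dictated by the $\Delta$-ordering. Consequently the maximum entry of the level-intersection multi-set equals $\max_i a_i$, giving $\mathrm{trunk}(L)\le \mathbb{WT}(D)$, and the sum of the level-intersection numbers equals $\sum_i a_i$, giving $w(L)\le \mathbb{WS}(D)$; minimizing over $D$ then yields the two desired inequalities. As in Theorem~\ref{thm:main} this requires the induction on the number of components $m$, with the cut-split case handled by the same string of (in)equalities: removing a splitting component $U$ deletes the pair $\{a_i, a_i-2\}$ from the associated sequence, which changes the maximum by at most deleting a non-maximal-or-maximal entry (so $\mathrm{trunk}(L\setminus U)\le \mathrm{trunk}(L)$, with the added entries never exceeding what is already there) and changes the sum by exactly $2a_i-2$, matching the corresponding change in $\mathbb{WT}$ and $\mathbb{WS}$.

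The main obstacle is the same one that makes Lemma~\ref{lem:nowiggle} and Lemma~\ref{mins} necessary: one must be sure that in the embedding built from the coloring, no extra maxima or minima are introduced beyond those accounted for by seeds and special crossings, and that, conversely, the heights really can be arranged so that the minimum coming from a special crossing $x$ sits above the maximum coming from a later seed $s$. For the lex-width this ordering subtlety was exactly what distinguished the Gabai-width argument of \cite{lee2019algorithmic} from the bridge-number arguments of \cite{blair2020wirtinger,pongtanapaisan2019wirtinger}; for the trunk and the Gabai width the same placement works verbatim because both statistics are invariant under reordering the critical values subject to the combinatorial constraints the $\Delta$-ordering imposes. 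I would therefore not reprove these lemmas but cite them, and simply remark that the embedding constructed in the proof of Theorem~\ref{thm:main} has level-intersection multi-set equal to $\{a_i\}$, so that taking the maximum or the sum of that multi-set gives the corollary; the cut-split string of relations above then closes the induction.
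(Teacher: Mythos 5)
Your proposal is correct and takes essentially the same approach the paper intends: the paper gives this corollary no separate proof, relying instead on the Convention at the start of Section~\ref{sec:coloringfromembedding} that arguments stated for ``width'' apply to all three flavors, together with the observation (which you make explicit) that the embedding built in the proof of Theorem~\ref{thm:main} has level-intersection multi-set equal to $\{a_i\}$, so passing to the maximum or the sum of that multi-set yields $\mathrm{trunk}(L)\le\mathbb{WT}(L)$ and $w(L)\le\mathbb{WS}(L)$ once combined with the reverse inequalities from Proposition~\ref{prop:colorbyheight} and its corollary. One small caveat worth being precise about: the embedding is not ``literally the same one'' across the three statistics, since the coloring sequence minimizing $\mathbb{WT}$, $\mathbb{WS}$, and $\mathbb{WL}$ need not coincide; what is identical is the construction as a recipe, applied to whichever coloring sequence minimizes the relevant statistic. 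The cut-split step you sketch is likewise the right adaptation, replacing the multi-set union by $\max$ or $\sum$ in the paper's string of (in)equalities.
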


\section{Computational results}\label{sec:computational}

In \cite{lee2019algorithmic}, the author's Python code works for knots (i.e. 1-component links) admitting a diagram with Wirtinger number 4. In this paper, we modify the code so that we can compute various notions of widths for multi-component links with Wirtinger number 4. A new challenge include adapting the code to detect the Type II special crossing, which is not present in the 1-component setting. Our code can be found in \cite{Code}.

The Gauss code of a knot can be thought of as a list of numbers. On the other hand, a multi-component link's Gauss code is made up of multiple lists. We begin by using the code in \cite{blair2020wirtinger} to find the Wirtinger numbers of links up to 14 crossings. We converted from DT codes of links up to 14 crossings from SnapPy \cite{SnapPy} to Gauss codes. If the Wirtinger algorithm outputs 2, then we know it is a 2-bridge link, and the trunk is 4. When the Wirtinger algorithm outputs a number greater than two, it is not guaranteed that the Wirtinger number (i.e., the bridge number) cannot be lower. To ensure this, we use the method from \cite{blair2022coxeter} to find matching lower bounds.

If the lower bound matches the upper bound of 3, then the trunk is 6 we took the links from the table of prime links. If the trunk were 4, then there must be a decomposing sphere, which only exists for composite links.

Roughly, the code works as follows. The code selects a subset of three strands, and extend the colorings as much as possible using only the coloring moves. Since we assumed that the Wirtinger number is 4, we know that the entire diagram is not colored.

If a special crossing is found, then we select another strand that we didn't select as the three initial seeds, and perform a seed addition move using this new strand. If these four strands together can be used to color the entire diagram using only the coloring moves, then the trunk is 6.

We remark that if the code outputs 8 for a link $L$ from the table, then we know that trunk$(L)$ is 6 or 8. A lower bound would need to be computed in order to actually show that the trunk is exactly 8. We did not pursue this direction for this current work, but properties such as waist \cite{ozawa2009waist}, representativity \cite{blair2019height}, and mp-smallness \cite{ozawa2009waist} can be used as a lower estimate. Nonetheless, the links where our program returns 8 is a good candidate for a link with no meridionally essential planar surfaces.

\section{Applications and questions}\label{sec:apps}
\subsection{Tube embeddings}

It is useful to model polymers under confinement as self-avoiding polygons in the simple cubic lattice \cite{hammersley1985self}. For $n,m \in \mathbb{Z}$, an \textit{$(m\times n)$-tube} is the subset $\mathbb{R}\times [0,m]\times[0,n].$ In \cite{ishihara2017bounds}, it is shown that embedding in a tube is characterized by the trunk. Links in the $(2\times 1)$-tube are more well-understood. Namely, a link type
$K$ can be confined in the $(2
\times 1)$-tube if and only if $K$ is a trivial knot or link, a 2-bridge knot or link, or connected sum or split sum of 2-bridge knots and links. This fact is an ingredient that the authors in \cite{beaton2022entanglement} used to answer difficult questions related to exponential growth rates. 

Toward a better understanding of linking statistics in larger tube sizes, we can utilize the current work to identify numerous specific links that fit in a $(3\times 1)$-tube.

\begin{cor}
    The 4-bridge links whose width is detected by our program fit in a $(3\times 1)$-tube.
\end{cor}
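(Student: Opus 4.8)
The plan is to feed the output of the program into the characterization of tube embeddings of \cite{ishihara2017bounds}. For the $(3\times 1)$-tube one has $(N+1)(M+1) = (3+1)(1+1) = 8$, so by \cite{ishihara2017bounds} a link type $L$ embeds in the $(3\times 1)$-tube if and only if $\mathrm{trunk}(L) < 8$. Since $\mathrm{trunk}(L)$ counts the points of intersection of a regular level $2$-sphere with $L$, it is always even, so this condition is equivalent to $\mathrm{trunk}(L)\leq 6$. Thus it suffices to show that any $4$-bridge link $L$ whose width is detected by the program satisfies $\mathrm{trunk}(L)\leq 6$.

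First I would unwind what a successful run of the program produces, following Section \ref{sec:computational}. Starting from a diagram $D$ with Wirtinger number $4$, the program colors from three initial seed strands; once a special crossing appears it performs a single seed addition, and it reports $6$ exactly when the resulting four colors then propagate over all of $D$ by coloring moves. This is precisely a completed coloring sequence for $D$ in the sense of Definition \ref{def:wirtlexwidth}: its associated multiset $\{a_i\}$ reaches $2,4,6$ at the three initial seeds, dips by $2$ at the triggering special crossing, and returns to $6$ at the added seed before descending again. Hence $\max\{a_i\} = 6$, so this sequence witnesses $\mathbb{WT}(D)\leq 6$, and therefore $\mathbb{WT}(L)\leq 6$. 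Applying the corollary $\mathbb{WT}(L) = \mathrm{trunk}(L)$ that follows Theorem \ref{thm:main} then yields $\mathrm{trunk}(L)\leq 6$ (in fact equality holds for the prime links in the table, since $\mathrm{trunk}(L)=4$ would force a decomposing sphere), and combining this with the first paragraph completes the proof.

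The step requiring the most care is checking that the running value of the associated sequence never exceeds $6$. The value increases by $2$ only at a seed and decreases by $2$ only at a special crossing, and there are four seeds in all (three initial, one added); the only way to reach $8$ would be to perform the fourth seed addition while the running value was still $6$, i.e.\ before any special crossing had occurred. But the program performs its seed addition only in response to a special crossing, so at least one $-2$ precedes the fourth $+2$ and the maximum stays at $6$. With this observation in hand, the corollary $\mathbb{WT}=\mathrm{trunk}$ and the criterion of \cite{ishihara2017bounds} leave no further obstacle.
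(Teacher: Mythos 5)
Your proof is correct and matches the paper's intended (unstated) argument: the program's output of $6$ certifies $\mathbb{WT}(L)\leq 6$ via the coloring sequence $2,4,6,4,6,\dots$, the corollary $\mathbb{WT}(L)=\mathrm{trunk}(L)$ gives $\mathrm{trunk}(L)\leq 6 < 8=(3+1)(1+1)$, and the Ishihara et al.\ characterization yields the tube embedding. Your careful check that the running value never reaches $8$ (because the fourth seed is added only after a special crossing) is exactly the observation the paper's description of the algorithm in Section~\ref{sec:computational} implicitly relies on.
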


\subsection{Thin position of branched cover}

The width of a 3-manifold \cite{scharlenaann1994thin} is a multi-set of integers, which record the information about the genera of the thick and thin levels.

More precisely, any closed 3-manifold $M$ can be constructed in stages $M = b_0 \cup N_1 \cup T_1 \cup N_2 \cup T_2 \cup \cdots \cup N_k \cup T_k \cup b_3$, where
$b_0$ is a collection of 0-handles, $b_3$ is a collection of 3-handles, each $N_i$ consists of 1-handles, and each $T_i$
consists of 2-handles $(1\leq i\leq k)$. Denote by $S_i$ the surface obtained from $\partial[b_0 \cup N_1 \cup T_1 \cup N_2 \cup T_2 \cup \cdots \cup N_i]$ by deleting all spheres bounding 0-handles or 3-handles in the decomposition.

By the \textit{complexity} $c(S)$ of a connected surface $S$, we mean
\begin{center}
    $c(S) = \begin{cases} 1 -\chi(S) & \text{if $S$ has positive genus} \\ 0 & \text{if $S$ is a 2-sphere}.\end{cases}$
\end{center}

The complexity of a disconnected surface is the sum of the complexities over the connected components.
\begin{rem}
    A reason why such a function is needed rather than merely recording the Euler characteristics of the surfaces is so that the function behaves well under appropriate operations used to prove theorems of interest.
\end{rem}
\begin{defn}[\cite{scharlenaann1994thin}]
    The \textit{width} of the decomposition of $M$ be the multi-set $\{c(S_i) | 1  \leq i  \leq k\}$. The \textit{width} of a 3-manifold $M$, denoted $w(M)$, is the minimal width over all decompositions using the dictionary
ordering (i.e., order the integers in each multi-set monotonically in non-increasing order, and then compare the ordered multi-sets lexicographically).
\end{defn}

As pointed out in \cite{howards2008thin} by Howards and Schultens, the width of the double branched cover $M_2(L)$ of $S^3$ along $L$ is related to the lex-width of $L.$ More specifically, in the multi-set realizing the lex-width of $L$, we begin by removing all the entries except the ones coming from the thick levels. Suppose that the resulting multi-set is $\lbrace a_1,a_2,\cdots, a_k\rbrace.$ Then, the sequence of the genera of thick levels are $\lbrace \frac{a_1}{2}-1,\frac{a_2}{2}-1,\cdots, \frac{a_k}{2}-1\rbrace$ and the width of $M_2(L)$ is at most $\lbrace a_1-3,a_2-3,\cdots, a_k-3\rbrace.$ A consequence of the computations done in this paper is the following.

\begin{cor}
    The width of the double branched covers of the 4-bridge links whose width is detected by our program is $\{3,3\}.$ 
\end{cor}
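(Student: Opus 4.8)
The plan is to read off the thick-level structure of a width-realizing position of $L$, deduce $w(M_2(L))\le\{3,3\}$ from the Howards--Schultens estimate recalled just above, and then obstruct any thinner decomposition of $M_2(L)$ using the essential torus into which the thin level of $L$ lifts.

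First I would record what the program certifies. For each such $L$, by Theorem~\ref{thm:main} the program's output equals the lex-width of $L$, which is $\{6,6,4,4,4,2,2\}$ (three initial seeds give the terms $2,4,6$, one special crossing gives the next $4$, the fourth seed addition gives $6$, and only coloring moves remain, producing $4,2$). Thus a width-realizing embedding of $L$ has exactly two thick levels, both of size $6$, and exactly one thin level, which is a $4$-punctured sphere $P$. Applying the Howards--Schultens inequality to the thick-level multi-set $\{6,6\}$ yields two thick surfaces of $M_2(L)$ of genus $6/2-1=2$, hence of complexity $1-(2-2\cdot2)=3$ each, sitting in a decomposition with precisely these two thick surfaces; therefore $w(M_2(L))\le\{3,3\}$. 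Since the matching lower bound obtained via \cite{blair2022coxeter} certifies that $L$ is genuinely $4$-bridge, this embedding is not a bridge position, and because $L$ is prime and non-split, Thompson's theorem \cite{thompson1997thin} together with its refinement by Hayashi--Shimokawa \cite{hayashi2001thin} shows that the thin surface $P$ is essential in $(S^3,L)$.

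For the reverse inequality I would argue as follows. Write $(S^3,L)=(B_1,t_1)\cup_P(B_2,t_2)$ as a union of two $2$-string tangles, so that $M_2(L)=W_1\cup_{\widetilde P}W_2$, where $\widetilde P$ is the torus lying over $P$ and $W_i$ is the double branched cover of $(B_i,t_i)$. Since $L$ is prime and non-split, $M_2(L)$ is irreducible; by an equivariant argument (using essentiality of $P$ and, e.g., the equivariant loop theorem) $\widetilde P$ is essential and each $W_i$ is irreducible with incompressible torus boundary, and in particular $W_i$ is not a solid torus, because the double branched cover of a $2$-string tangle is a solid torus exactly for rational tangles, which essentiality of $P$ excludes. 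Next I would invoke the thin-position principle that a minimal-width decomposition of a closed irreducible $3$-manifold can be chosen with a prescribed essential surface appearing among its thin surfaces, and take $\widetilde P$ to be such a thin surface. The decomposition then restricts on each $W_i$ to a generalized Heegaard splitting with $\partial W_i=\widetilde P$ as a lower boundary; were all of its thick surfaces of genus $\le1$, then after discarding the (necessarily trivial, by irreducibility) sphere thick surfaces every compression body would be a product on a torus and $W_i$ would be a solid torus, a contradiction. Hence each $W_i$ supplies a thick surface of genus $\ge2$, i.e.\ complexity $\ge3$; these are two distinct thick surfaces of the decomposition of $M_2(L)$, whence $w(M_2(L))\ge\{3,3\}$, and combined with the upper bound the width is exactly $\{3,3\}$.

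The step I expect to be the main obstacle is the assertion that $w(M_2(L))$ is attained by a decomposition in which the essential torus $\widetilde P$ occurs among the thin surfaces --- this is the Scharlemann--Thompson-type fact that thin surfaces of a thin position are incompressible and, conversely, that a given essential surface can be isotoped into that role (cf.\ the thin-position framework behind \cite{scharlenaann1994thin,howards2008thin}), and its precise invocation is what drives the lower bound. The remaining technical points --- the equivariant verification that $\widetilde P$ is essential in $M_2(L)$, and that $W_i$ has incompressible boundary when $t_i$ is essential --- are standard in the double-branched-cover literature but should be stated carefully.
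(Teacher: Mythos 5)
The paper provides no lower-bound argument for this corollary: the Howards--Schultens material quoted just before it gives only the inequality $w(M_2(L))\le\{a_1-3,\ldots,a_k-3\}$, and the corollary follows from reading off $a_1=a_2=6$ from the thick levels of the coloring the program produces. Your upper-bound paragraph reproduces exactly that reasoning, and the arithmetic (genus $6/2-1=2$, complexity $3$) is correct. What you have added is an entire lower-bound argument, which the paper does not attempt; this is laudable, but two things go wrong.

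First, a smaller issue: the program certifies a coloring sequence producing the multi-set $\{6,6,4,4,4,2,2\}$, which by Theorem~\ref{thm:main} is an \emph{upper} bound for the lex-width. Since the lex-width is a minimum over all Morse positions, a 4-bridge link with trunk~$6$ could a priori have lex-width $\{6,4,4,4,2,2,2\}$ (thick levels $\{6,4\}$), and then the Howards--Schultens bound already drops to $\{3,1\}$. So ``width detected by our program'' has to be read as the program exhibiting the lex-width exactly, not merely the trunk, for your setup to start.

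Second, and more seriously, the step you yourself flag as the ``main obstacle'' is a genuine gap. Scharlemann--Thompson prove that in a width-minimizing generalized Heegaard splitting the thin surfaces are incompressible; they do \emph{not} prove the converse, that any prescribed essential surface can be isotoped to appear among the thin surfaces of some width-realizing decomposition. Indeed the analogous statement already fails one dimension down: it is known that a knot can admit essential meridional planar surfaces that cannot be realized as thin levels in any thin position. Without this converse, the deduction that each $W_i$ contributes a genus-$\ge 2$ thick surface to a width-minimizing decomposition of $M_2(L)$ does not follow, and you have not ruled out, say, a genus-$2$ Heegaard splitting of $M_2(L)$, which would give width $\{3\}<\{3,3\}$. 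As written, then, your argument proves only the same upper bound as the paper; the claimed equality would require either a correct substitute for the untwinning step or an independent genus/width lower bound for $M_2(L)$.
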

\subsection{Thin position of graphs}
As spatial graph theory can be thought of as a generalization of knot theory, it is natural to study the width of spatial graphs. Naively, perhaps one may want to define the width in terms of the intersection of thick and thin levels with the graph itself. Scharlemann pointed out, however, that by defining the width this way, we do not get some of the desirable conclusions as in the knots and links case \cite{scharlemann2005thin}. Instead, it is more helpful to define the width of a spatial graph by considering certain links associated to the graph.

There is a convenient way to construct a link $L_{\Gamma}$ from a spatial graph $\Gamma$ by letting $L_{\Gamma}$ be the boundary of a ribbon surface whose core is $\Gamma$. In \cite{scharlemann2005thin,li2010thin}, the author showed that the Gabai width of the link $L_{\Gamma}$ gives useful information about the essential surfaces in the exterior of $\Gamma.$

\subsection{Questions}

There are several conjectures related to Morse positions of knots that can be rephrased using our results. This may offer new perspectives.

\begin{defn}
    The \textit{height} of a conformation $\gamma$ of a knot $K$, denoted $ht(\gamma)$, is the number of thick level spheres for $\gamma.$ The \textit{height} $ht(K)$ (resp. \textit{min-height} $ht_{\min}(K)$) of a knot is the maximum (resp. minimum) height $ht(\gamma)$  over all thin positions of $K$. 
\end{defn}
The following conjecture is expected.
\begin{conj}
    There is a knot $K$ such that $ht_{\min}(K)<ht(K)$.
\end{conj}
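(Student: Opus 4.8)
The plan is to leverage Theorem~\ref{thm:main} to turn this question into a finite combinatorial search. The key observation is that, in a completed coloring sequence, the associated sequence $(a_i)$ of Definition~\ref{def:wirtlexwidth} mimics the list of intersection numbers of level surfaces read from top to bottom: a seed addition plays the role of a maximum (a step of $+2$) and a special crossing plays the role of a minimum (a step of $-2$). Hence a thick level sphere of the corresponding embedding corresponds to a \emph{peak} of $(a_i)$ --- an index where a run of $+2$'s is immediately followed by a run of $-2$'s --- and the number of thick level spheres equals the number of peaks. Now the lex-width depends only on the multi-set $\{a_i\}$, not on the order in which its elements occur, whereas the peak count is order-sensitive. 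So it suffices to exhibit a single knot $K$ admitting two completed coloring sequences that both realize the minimal Wirtinger lex-width $\mathbb{WL}(K)$ but whose associated sequences have different numbers of peaks. The knots flagged by Davies and Zupan in \cite{davies2017natural} as likely witnesses of competing widths are the natural place to start.

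Concretely, the steps would be: (1) fix a candidate knot $K$ --- taken from \cite{davies2017natural}, or located by extending the computer search of Section~\ref{sec:computational} --- and run the coloring algorithm to compute the minimal Wirtinger lex-width multi-set $M$, certifying that it is genuinely the minimum by producing a matching lower bound (combining the bridge-number estimate, the Coxeter-quotient method of \cite{blair2022coxeter}, and the elementary inequalities recorded in Section~\ref{section:prelims}); (2) among the completed coloring sequences of a diagram of $K$ whose associated sequence realizes $M$, display one sequence $\sigma_1$ with $p_1$ peaks and a second sequence $\sigma_2$ with $p_2 > p_1$ peaks, checking in each case that it is a legitimate sequence of coloring moves and seed additions --- in particular that Proposition~\ref{prop:connected} is respected and that every claimed special crossing is genuinely of type I or type II; (3) run $\sigma_1$ and $\sigma_2$ through the embedding construction in the proof of Theorem~\ref{thm:main}, obtaining Morse conformations $\gamma_1$ and $\gamma_2$ of $K$ whose lex-widths both equal $M$, so that both are thin positions, and observing that $\gamma_\ell$ has exactly $p_\ell$ thick level spheres by construction; (4) conclude $ht_{\min}(K) \le p_1 < p_2 \le ht(K)$.

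The principal obstacle is step (1): certifying that the computed multi-set $M$ is actually the lex-width of $K$. As emphasized in the introduction, width-type invariants are hard to bound from below, and in the absence of a matching lower bound one only learns that $\gamma_1$ and $\gamma_2$ realize the best width \emph{found so far}, which is not enough to conclude that they are thin and hence not enough to deduce $ht(K) \ge p_2$. A secondary difficulty lies in step (2): an arbitrary reordering of $M$ need not be produced by any legal coloring sequence, so both orderings must be verified to be attainable, and the type II crossings --- which arise exactly when a single color traces out a separating loop --- are likely to be the delicate point. By contrast the bound $ht_{\min}(K) \le p_1$ is immediate, and one might even aim for the stronger statement $ht_{\min}(K) = 1$ by choosing $K$ so that some thin position is a bridge position; by Thompson's theorem \cite{thompson1997thin} this would additionally pin down $K$ as $2$-bridge or as a knot whose exterior contains an essential meridional planar surface, a convenient consistency check when selecting the candidate.
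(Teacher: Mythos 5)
This statement is a \emph{conjecture}, not a theorem: the paper explicitly writes ``The following conjecture is expected'' and offers no proof. There is consequently nothing to compare your argument against. What the paper does do, in the two sentences immediately following the conjecture, is observe the same translation you propose: a thick level corresponds to a ``chain of seeds,'' i.e.\ a maximal run of seed additions in the coloring sequence, which is precisely your notion of a peak of $(a_i)$. The paper then poses the equivalent combinatorial problem as an open Question (``Is the maximum number of chains over all diagrams realizing Wirtinger width the same as the minimum\ldots?''). So your reframing is exactly the one the paper intends, and the logical skeleton of steps (3)--(4) --- build $\gamma_1$, $\gamma_2$ from two lex-width-realizing coloring sequences with different peak counts, read off $ht_{\min}(K)\le p_1 < p_2 \le ht(K)$ --- is sound: the inequalities point the right way, and the embedding construction of Section~\ref{sec:embeddingfromcoloring} does place a maximum at each seed addition and a minimum at each special crossing in the order dictated by the stages, so the number of thick levels of the constructed $\gamma_\ell$ is indeed the number of peaks $p_\ell$.

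That said, this is a research plan, not a proof, and you correctly identify the two genuine gaps. First, no candidate $K$ is actually produced, and without a certified lower bound matching the computed $\mathbb{WL}(K)$ you cannot conclude that either $\gamma_1$ or $\gamma_2$ is a thin position, so neither inequality $ht_{\min}(K)\le p_1$ nor $p_2\le ht(K)$ is established --- the latter especially, since $ht(K)$ is a maximum over thin positions only. Second, step (2) is not automatic: the multi-set $M$ constrains, but does not determine, the ordered sequence $(a_i)$, and one must exhibit two legal coloring sequences (respecting Proposition~\ref{prop:connected}, with every special crossing genuinely of type I or II) whose associated sequences realize $M$ in orders with different peak counts; for many multi-sets the valid $\pm 2$-walk from $2$ back to $2$ hitting $M$ with the right multiplicities is unique up to reversal, so the candidate $M$ must be chosen with care. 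A third point worth flagging, which you do not raise: the paper's definition of $ht$ and $ht_{\min}$ quantifies over ``all thin positions of $K$'' without specifying whether thin means lex-width-minimizing, Gabai-width-minimizing, or trunk-minimizing, and Remark~2.5 warns that these need not coincide; any eventual proof would have to fix this ambiguity. In short, the approach is the natural one and agrees with the paper's own heuristic discussion, but it does not prove the conjecture.
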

Call a consecutive seed addition moves with no special crossings in between a \textit{chains of seeds}. Using our main theorem, we see that the height of $\gamma$ is the number of chains of seeds.

\begin{quest}
Is the maximum number of chains over all diagrams realizing Wirtinger width the same as the minimum number of chains over all diagrams realizing Wirtinger width?
\end{quest}

\subsection*{Acknowledgements}
We thank Ryan Blair, Jeremy Eng, Rob Scharein, and Chris Soteros for helpful conversations.

\bibliographystyle{plain}
\bibliography{ref}
\end{document}